\setlist[enumerate,1]{label={(\roman*)}}
\setlist[enumerate,2]{label={(\alph*)}}
\setlist[enumerate,3]{label={(\arabic*)}}
\newcommand{\dd}{\mathcal{D}}
\newcommand{\lp}{\mathcal{P}}
\newcommand{\SSS}{\mathfrak{S}}
\newcommand{\eul}{\mathcal{A}}
\newcommand{\N}{\mathbb{N}}
\DeclareMathOperator{\wt}{wt}
\DeclareMathOperator{\ch}{ch}
\DeclareMathOperator{\cl}{cl}
\DeclareMathOperator{\rev}{rev}
\DeclareMathOperator{\bl}{bl}
\newtheorem{theorem}{Theorem}[section]
\newtheorem{proposition}[theorem]{Proposition}
\newtheorem{corollary}[theorem]{Corollary}
\theoremstyle{definition}
\newtheorem*{remark}{Remark}
\newtheorem{example}{Example}
\numberwithin{equation}{section}
\title{Log-concavity of rows of triangular arrays satisfying a certain super-recurrence}
\author[1]{Umesh Shankar\thanks{\tt{204093001@iitb.ac.in, umeshshankar@outlook.com}}} 
\affil[1]{Department of Mathematics, Indian Institute of Technology Bombay, Mumbai 400076, India} 
\date{\today}
\begin{document}
\maketitle
\begin{abstract}
Recurrences of the form 
\begin{equation*}
    T(n,k) = (\alpha n+\beta k +\gamma) \ T(n-1,k) + (\alpha'n+\beta'k+\gamma')\ T(n-1,k-1)+\delta_{n,0}\delta_{k,0}.
\end{equation*}
show up as the recurrence for many well-studied combinatorial sequences such as the Stirling numbers of first and second kind, the Lah numbers, Eulerian numbers etc. Recently, many of these sequences have received generalisations that obey a recurrence of the form
\begin{equation*}
    T(n,k) = (\alpha n+\beta k +\gamma)^l \ T(n-1,k) + (\alpha'n+\beta'k+\gamma')^l\ T(n-1,k-1)+\delta_{n,0}\delta_{k,0}.
\end{equation*}
where $l$ is a positive integer. Many of these generalised sequences also satisfy properties such as unimodality, log-concavity, gamma-nonnegativity, real-rootedness that the original sequences satisfy. In this article, we give sufficient conditions for rows of triangular arrays, arising from the recurrence stated above, to be log-concave. We show that our sufficient condition is satisfied by many of the classical examples, thereby giving a new unified approach to proving their log-concavity. This sufficient condition also confirms a conjecture of Tankosi{\v c} \cite{aleks-gen-lah} about the log-concavity of generalised Lah numbers. 

Our main technique will be to interpret the triangular array $(T(n,k))$ as weighted lattice paths and produce an injection that is increasing in weight.
Finally, we introduce a two-parameter generalisation of the Eulerian numbers analogous to the generalised Stirling and Lah counterparts. We prove that this sequence is palindromic and make some remarks about their gamma-nonnegativity and real-rootedness.
\end{abstract}
\section{Introduction}
Let $\N$ denote the set of non-negative integers and for $n \in \N$, let $[n]$ denote the set $\{ 1,2,\dotsc, n\}$. A finite sequence of real numbers $(a_k)_{k=0}^n$ is said to be log-concave if $a_{k}^2\ge a_{k+1}a_{k-1}$ for all indices $k\in [n-1]$. Log-concavity is a well-studied property of finite sequences and it appears in various areas of mathematics such as combinatorics, probability and algebra. See the surveys of Stanley \cite{stanley-survey-logcon}, Branden \cite{branden-unimodality_log_concavity} for a wealth of results on log-concavity.

A \emph{triangular array} $T(n,k)$ is a collection of real numbers that satisfies a recurrence of the form 
\begin{equation}\label{Eq:triangular}
    T(n,k) = c(n,k)\ T(n-1,k) + d(n,k)\ T(n-1,k-1) + \delta_{n,0}\delta_{k,0}, \qquad \text{for }n, k \geq 0,
\end{equation}
and $T(n,k) = 0$ for $n < 0$, $k < 0$, or $k > n$, for some $c(n,k), d(n,k) \in \mathbb{R}$ and $\delta_{i,j}$ is the Kronecker delta function.
Several interesting enumerative sequences obey a recurrence of this form, including the binomial numbers, $\binom{n}{k}$, the Stirling numbers of the first and second kinds, $s(n,k)$ and $S(n,k)$, the Lah numbers, $L(n,k)$, and the Eulerian numbers, $A(n,k)$.
These sequences and their generating functions are rich in combinatorial structure, and well-studied in the literature, in particular with respect to properties such as log-concavity, unimodality, gamma positivity, and real-rootedness.
Moreover, in all these examples we have that the expressions for $c(n,k)$ and $d(n,k)$ are affine functions in the variables $n$ and $k$.


To prove the log-concavity of the Stirling numbers of both kinds, Kurtz \cite{kurtz-log-concavity} gave the following sufficient condition for triangular arrays.

\begin{theorem}[{\cite[Theorem 2]{kurtz-log-concavity}}]
    Let $T(n,k)=c(n,k)\ T(n-1,k) + d(n,k)\ T(n-1,k-1)$ and 
    \begin{itemize}
        \item $2c(n,k)\ge c(n,k-1)+c(n,k+1),$
        \item $2d(n,k)\ge d(n,k-1)+d(n,k+1),$
    \end{itemize}
     then, the array $T(n,k)$ is log-concave in $k$, for all $n$.
\end{theorem}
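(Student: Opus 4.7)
The plan is to induct on $n$. The base cases $n \in \{0, 1\}$ hold vacuously since the corresponding rows have at most two non-zero entries. For the inductive step fix $n \geq 2$, assume $T(n-1,\cdot)$ is log-concave and non-negative, and abbreviate $\alpha := c(n,k)$, $\alpha^{\pm} := c(n, k\pm 1)$, $\beta := d(n,k)$, $\beta^{\pm} := d(n, k\pm 1)$, and $T_j := T(n-1, k+j)$ for $j \in \{-2, -1, 0, 1\}$. Expanding via the recurrence,
\begin{align*}
\Delta &:= T(n,k)^2 - T(n,k-1)T(n,k+1) \\
&= \alpha^2 T_0^2 + 2\alpha\beta T_{-1} T_0 + \beta^2 T_{-1}^2 - \alpha^- \alpha^+ T_{-1} T_1 \\
&\qquad - \alpha^- \beta^+ T_{-1} T_0 - \beta^- \alpha^+ T_{-2} T_1 - \beta^- \beta^+ T_{-2} T_0.
\end{align*}

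From the inductive hypothesis we have $T_0^2 \geq T_{-1} T_1$ and $T_{-1}^2 \geq T_{-2} T_0$; multiplying them (handling degenerate zero cases separately) yields the ``skip-one'' bound $T_{-1} T_0 \geq T_{-2} T_1$. Using these three inequalities to replace each negatively-signed product in $\Delta$ by its upper bound gives
\[
\Delta \;\geq\; Q(T_{-1}, T_0) \;:=\; (\alpha^2 - \alpha^- \alpha^+)\, T_0^2 + (2\alpha\beta - \alpha^- \beta^+ - \alpha^+ \beta^-)\, T_{-1} T_0 + (\beta^2 - \beta^- \beta^+)\, T_{-1}^2.
\]
The coefficients of $T_0^2$ and $T_{-1}^2$ are non-negative by AM--GM applied to the concavity hypotheses on $c$ and $d$ (using non-negativity of $c, d$, which is implicit). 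The cross coefficient $2\alpha\beta - \alpha^- \beta^+ - \alpha^+ \beta^-$, however, can be negative in general (as simple examples show), so the non-negativity of $Q$ on the positive orthant is the delicate point.

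This non-negativity is the main obstacle, and I expect to handle it via the sum-of-squares identity below, which can be verified by direct expansion. Writing $s_\alpha := 2\alpha - \alpha^- - \alpha^+ \geq 0$ and $s_\beta := 2\beta - \beta^- - \beta^+ \geq 0$ for the concavity slacks, one has
\[
Q(T_{-1}, T_0) \;=\; \bigl[(\alpha - \alpha^-)\, T_0 + (\beta - \beta^-)\, T_{-1}\bigr]^2 + (s_\alpha T_0 + s_\beta T_{-1})(\alpha^- T_0 + \beta^- T_{-1}).
\]
Both summands on the right are manifestly non-negative, so $Q \geq 0$ and hence $\Delta \geq 0$, completing the induction. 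The cleverness lies in spotting this identity: the concavity slacks $s_\alpha, s_\beta$ are exactly what is needed to absorb the worst-case negativity of the cross coefficient in $Q$.
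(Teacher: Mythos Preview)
The paper does not prove this statement; it merely quotes Kurtz's theorem as background in the introduction and then moves on to its own lattice-path framework for the stronger Theorem~\ref{thm: main}. So there is no ``paper's proof'' to compare against.

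Your argument is correct and is essentially the classical inductive proof (close in spirit to Kurtz's original). The expansion of $\Delta$ is right, the three inductive bounds $T_0^2\ge T_{-1}T_1$, $T_{-1}^2\ge T_{-2}T_0$, $T_{-1}T_0\ge T_{-2}T_1$ are applied in the correct direction, and your algebraic identity
\[
Q(T_{-1},T_0)=\bigl[(\alpha-\alpha^-)T_0+(\beta-\beta^-)T_{-1}\bigr]^2+(s_\alpha T_0+s_\beta T_{-1})(\alpha^- T_0+\beta^- T_{-1})
\]
checks out term by term (I verified all three coefficients). Non-negativity of the second summand uses $s_\alpha,s_\beta\ge 0$ from the concavity hypotheses together with $\alpha^-,\beta^-,T_0,T_{-1}\ge 0$; as you note, non-negativity of $c,d$ is an implicit standing assumption in the paper (see its Section~2). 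The only point to tidy is the ``skip-one'' bound in the degenerate case $T_{-1}=T_0=0$: log-concavity of row $n-1$ alone does not force $T_{-2}T_1=0$, so you should either carry ``no internal zeros'' as part of the inductive hypothesis (it propagates easily when $c,d\ge 0$) or dispose of that case directly. This is a routine patch and does not affect the substance of the argument.
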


A strengthening of the Theorem was obtained by Sagan \cite{sagan-log-concave} wherein the hypothesis of concavity of the coefficients was weakened to log-concavity of the coefficients and concavity of the product. The theorem is as follows.

\begin{theorem}[{\cite[Theorem 1]{sagan-log-concave}}]
    Let $T(n,k)$ be an extended triangular array satisfying $T(n,k)=c(n,k)\ T(n-1,k) + d(n,k)\ T(n-1,k-1)$ for all $n\ge 1$ and $T(n,k),c(n,k),d(n,k)$ are all non-negative integers. Suppose 
    \begin{itemize}
        \item $c(n,k),d(n,k)$ are log-concave in $k$,
        \item $d(n,k-1)c(n,k+1)+d(n,k+1)c(n,k-1)\le 2d(n,k)c(n,k)$,
    \end{itemize}
    then, the $T(n,k)$ are log-concave in $k$.
\end{theorem}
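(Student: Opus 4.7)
The plan is to induct on $n$, with the base cases $n = 0, 1$ being trivial since the row has length at most $2$. For the inductive step, assume $T(n-1,\cdot)$ is log-concave, and try to show that $T(n,k)^2 \geq T(n,k-1)\, T(n,k+1)$. Expanding both sides via the recurrence and abbreviating $c_i := c(n,k+i)$, $d_i := d(n,k+i)$, $T_j := T(n-1,k+j)$, one gets
\begin{align*}
T(n,k)^2 - T(n,k-1)\,T(n,k+1) &= c_0^2 T_0^2 + 2 c_0 d_0 T_0 T_{-1} + d_0^2 T_{-1}^2 \\
&\quad - c_{-1} c_1 T_{-1} T_1 - d_{-1} d_1 T_{-2} T_0 \\
&\quad - c_{-1} d_1 T_{-1} T_0 - d_{-1} c_1 T_{-2} T_1.
\end{align*}

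The main step is to regroup this as the sum of three nonnegative pieces. For the first piece $c_0^2 T_0^2 - c_{-1} c_1 T_{-1} T_1$, log-concavity of $c$ in $k$ gives $c_0^2 \ge c_{-1} c_1$, and the inductive log-concavity of $T_0^2 \ge T_{-1} T_1$ finishes it. The analogous piece $d_0^2 T_{-1}^2 - d_{-1} d_1 T_{-2} T_0$ is handled symmetrically. The third, cross-term piece $2 c_0 d_0 T_0 T_{-1} - c_{-1} d_1 T_{-1} T_0 - d_{-1} c_1 T_{-2} T_1$ is where the second hypothesis enters: $2 c_0 d_0 \ge c_{-1} d_1 + c_1 d_{-1}$, and then I need to upgrade this to the weighted inequality with the $T$ factors.

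The subtle step I expect to do the real work is comparing $T_0 T_{-1}$ to $T_{-2} T_1$. Here I would use the inductive hypothesis twice: $T_0^2 \ge T_{-1} T_1$ and $T_{-1}^2 \ge T_{-2} T_0$ multiply to give $(T_0 T_{-1})^2 \ge T_{-2} T_{-1} T_0 T_1$, and dividing by $T_0 T_{-1}$ (when nonzero) yields $T_0 T_{-1} \ge T_{-2} T_1$. With this in hand, $c_1 d_{-1} T_0 T_{-1} \ge c_1 d_{-1} T_{-2} T_1$ since $c_1, d_{-1} \ge 0$, so
\[
2 c_0 d_0 T_0 T_{-1} \;\ge\; (c_{-1} d_1 + c_1 d_{-1}) T_0 T_{-1} \;\ge\; c_{-1} d_1 T_{-1} T_0 + d_{-1} c_1 T_{-2} T_1,
\]
which kills the cross piece.

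The one issue that needs separate handling is when one of $T_0, T_{-1}$ vanishes (so the division above is illegitimate) and the boundary indices where $T_{-2}$ or $T_1$ or the $c, d$ with shifted argument are zero or undefined. In such degenerate cases the right-hand side of the desired inequality simply loses terms, and one checks directly that the remaining $c_0^2 T_0^2$ or $d_0^2 T_{-1}^2$ bound the surviving pieces using the same three hypotheses. So the main obstacle is really the bookkeeping of the cross term; once the inequality $T_0 T_{-1} \ge T_{-2} T_1$ is extracted from induction, the rest is a clean assembly of the three hypotheses with nonnegativity.
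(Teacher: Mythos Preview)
Your inductive argument is correct, including the cross-term step: multiplying $T_0^2 \ge T_{-1}T_1$ and $T_{-1}^2 \ge T_{-2}T_0$ does give $T_0T_{-1} \ge T_{-2}T_1$, and the degenerate case is harmless because a nonnegative log-concave sequence has interval support, so $T_{-2}>0$ and $T_1>0$ force $T_{-1},T_0>0$.

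However, note that the paper does not prove this theorem at all: it is quoted in the introduction as Sagan's result \cite[Theorem~1]{sagan-log-concave} and serves only as background. The paper's own contribution (Theorem~\ref{thm: main}) is proved by a genuinely different, combinatorial route: it interprets $T(n,k)$ as a sum of weights of lattice paths, builds an explicit injection $\lp_{(k+1,n)}\times\lp_{(k-1,n)}\to\lp_{(k,n)}\times\lp_{(k,n)}$ by swapping tails at a suitable index, and then checks that the weight of each pair does not decrease by encoding the swapped portion as a $2$-Motzkin path. Sagan's original proof is in this injective spirit as well. Your algebraic induction is shorter and entirely self-contained for the hypotheses of Sagan's theorem, but it does not visibly extend to the more intricate condition~(ii) of Theorem~\ref{thm: main}, which pairs an up step with its matching down step across different rows; the lattice-path/injection framework is what buys that extra flexibility.
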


Recently, some of the enumerative sequences listed above have been generalized by introducing more parameters.
Let $r \in \N$.The \index{$r$-Stirling number of the first kind}$r$-Stirling number of the first kind, $s_r(n,k)$ (with $n\ge k\ge r$), is defined to be the number of permutations 
in $\SSS_n$ with $k$ cycles such that $1,2,\dotsc, r$ are in different cycles.
The \index{$r$-Stirling number of the second kind}$r$-Stirling number of the second kind, $S_r(n,k)$ (with $n\ge k\ge r$), is defined to be the number of partitions
 of $[n]$ into $k$ blocks such that $1,2,\dots,r$ are in different blocks. These numbers have been studied in great detail, see \cite{broder-r-stir, stirling-ref-2}.
The closely related\index{$r$-Lah number} $r$-Lah number, $L_r(n,k)$ (with $n\ge k\ge r$), is defined to be the number of partitions of 
$[n]$ into $k$ nonempty ordered subsets such that $1,2,\dots,r$ are in different ordered subsets.
These were studied in \cite{r-lah-ref-1,r-lah-ref-2,lah-ref-3}.
The $r$-Stirling numbers and $r$-Lah numbers are also triangular arrays in the sense of \eqref{Eq:triangular}, with 
coefficients that are affine functions in the variables.

These numbers have been further generalised with an additional parameter as follows (see \cite{lr-stir-ref,aleks-gen-lah}).
Let $l \in \mathbb{N}$.
Define the \index{cycle leader}\emph{cycle leader} of a permutation $\pi \in \SSS_n$, denoted $\cl(\pi)$, to be the set of minimum elements 
of each cycle in the cycle decomposition of $\pi$.
The $(l,r)$-Stirling number of the first kind, $s_r^{(l)}(n,k)$, is defined to be the number of $l$-tuples of permutations
 $(\pi_1,\dotsc,\pi_l)$ such that $[r]\subset \cl(\pi_1)$ and $\cl(\pi_1)=\dotsb=\cl(\pi_l)$. 
Define the\index{block leader} \emph{block leader} of a partition $\Pi$ of $[n]$, denoted $\bl(\Pi)$, to be the set of minimum elements of each
 block of the partition $\Pi$.
The $(l,r)$-Stirling number of the second kind, $S_r^{(l)}(n,k)$, is defined similarly: it is the number of $l$-tuples of 
partitions $(\Pi_1,\dotsc,\Pi_l)$ such that $[r]\subset \bl(\Pi_1)$ and $\bl(\Pi_1)=\dotsb=\bl(\Pi_l)$. 
The $(l,r)$-Lah number is defined in a similar manner to the $(l,r)$-Stirling number of the second kind.
It counts the number of $l$-tuples of partitions $(\Pi_1,\dotsc,\Pi_l)$ into $k$ linearly ordered blocks such 
that $[r]\subset \bl(\Pi_1)$ and $\bl(\Pi_1)=\dotsb=\bl(\Pi_l)$.

The $(l,r)$-Stirling numbers and $(l,r)$-Lah numbers are also triangular arrays; in particular, equation~\eqref{Eq:triangular} for these numbers take the following forms:
\begin{align*}
    s_r^{(l)}(n,k) &= (n-1)^l s_r^{(l)}(n-1,k) + s_r^{(l)}(n-1,k-1)\\
    S_r^{(l)}(n,k) &= k^l S_r^{(l)}(n-1,k) + S_r^{(l)}(n-1,k-1)\\
    L_r^{(l)}(n,k) &= (n+k-1)^l L_r^{(l)}(n-1,k) + L_r^{(l)}(n-1,k-1)
\end{align*}
for $1 \leq r \leq k \leq n$.
While it is easy to show that the $(l,r)$-Stirling numbers of both kinds are log-concave, it was conjectured in \cite{aleks-gen-lah} that the $(l,r)$-Lah numbers are log-concave in $k$ as well. However, the theorems of Kurtz and Sagan are not applicable to these arrays when $l\ge 2$. 

Motivated by the form of the recurrences appearing in the conjecture, we develop a general method to establish log-concavity of many so-called ``super-recurrence'' relations of the form
\begin{equation}\label{Eq:super-recurrence}
    T(n,k)=(\alpha n+\beta k +\gamma)^l\ T(n-1,k) + (\alpha'n+\beta'k+\gamma')^l\ T(n-1,k-1) + \delta_{n,0}\delta_{k,0}, \quad \text{for } n, k \geq 0,
\end{equation}
and $T(n,k) = 0$ for $n < 0$, $k < 0$, or $k > n$, where $\alpha,\alpha',\beta,\beta',\gamma,\gamma' \in \N$, and $\delta_{i,j}$ is the Kronecker delta function. We also show that 
The main result of the paper is a sufficient condition for log-concavity of triangular arrays which is the following.
\begin{theorem}\label{thm: main}
     For $n\ge 1$ and $0<k \le n$, let $T(n,k)$ satisfy the following triangular recurrence.
    \begin{equation*}
        T(n,k)=c(n,k)\ T(n-1,k)+d(n,k)\ T(n-1,k-1)
    \end{equation*}
    The sequence $(T(n,k))_{k=0}^n$ is log-concave if $(i),(ii)$ and $(iii)$ hold.
    \begin{enumerate}
        \item $c(n,k),d(n,k)$ are log-concave in $k$.
        \item  Suppose \begin{itemize}
     \item $n_2>n_1$; $l_2>k_2>k_1$; $l_2>l_1>k_1$,
     \item $k_2-k_1=l_2-l_1+1$,
     \item $n_2-n_1\ge l_2-k_2$ and $n_2-n_1\ge l_1-k_1$.
\end{itemize} Then,
$$c(n_1,k_1)d(n_1,k_2)d(n_2,l_1)c(n_2,l_2)\le c(n_1,k_1+1)d(n_1,k_2-1)d(n_2,l_1+1)c(n_2,l_2-1)$$ 
        \item For all $k$, we have $d(n_1,k+1)c(n_1,k)\le d(n_1,k)c(n_1,k+1)$.
    \end{enumerate}
 \end{theorem}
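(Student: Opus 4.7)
Following the abstract's outline, I would begin by writing $T(n,k)$ as a weighted sum over lattice paths. A path from $(0,0)$ to $(n,k)$ consists of $n$ unit steps, each either east or north-east; a step arriving at vertex $(n',k')$ contributes weight $c(n',k')$ if it is east and $d(n',k')$ if it is north-east, and $T(n,k)$ is the sum of the step-product weights over all such paths. With this representation, the inequality $T(n,k)^2 \ge T(n,k-1)\,T(n,k+1)$ reduces to constructing a weight-non-decreasing injection $\Phi$ from pairs $(P_1,P_2)$ of paths with endpoints $(n,k-1)$ and $(n,k+1)$ to pairs $(P_1',P_2')$ of paths both ending at $(n,k)$.

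The injection $\Phi$ I would try is a single-step ``first-crossing'' swap. Writing $y_i(t)$ for the height of $P_i$ at time $t$, the difference $y_2-y_1$ starts at $0$ and ends at $2$; let $t^\ast$ be the least $t$ with $y_2(t)-y_1(t)=1$. At $t^\ast$ the path $P_1$ takes an east step and $P_2$ takes a north-east step. Form $(P_1',P_2')$ by interchanging these two step types at $t^\ast$ and leaving every other step unchanged; a quick count shows both new paths end at $(n,k)$. Injectivity follows by recovering $t^\ast$ from the image pair, which one does by identifying the unique time at which a specific height coincidence first arises.

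The key work is the weight comparison. At $t^\ast$ the two step weights $c(t^\ast, y_1(t^\ast))$ and $d(t^\ast, y_1(t^\ast)+1)$ are merely swapped between the paths, contributing a factor of exactly $1$. For each time $t>t^\ast$ the step types of $P_1',P_2'$ are inherited but their heights are rigidly shifted: $P_1'$ sits one unit above $P_1$ and $P_2'$ one unit below $P_2$. Consequently $\omega(P_1')\omega(P_2')/\bigl(\omega(P_1)\omega(P_2)\bigr)$ factors as a product over $t>t^\ast$ of local ratios, classified by the step-type pair at time $t$. Both-east and both-north-east pairs contribute factors $\ge 1$ by condition~(i) applied to $c$ and $d$ respectively. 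A diagonal-step count shows that mixed pairs of type (east, north-east) exceed those of type (north-east, east) by exactly one. I would pair each (north-east, east) step at time $n_2$ canonically with an earlier (east, north-east) step at time $n_1<n_2$, in Dyck-path matching fashion; the joint ratio contributed by such a pair is precisely the ratio appearing in condition~(ii), and the constraints $k_2-k_1=l_2-l_1+1$ and $n_2-n_1\ge l_j-k_j$ are forced by the heights realized along $(P_1,P_2)$ between the paired times. The leftover (east, north-east) step can be arranged to occur at a time where $y_2-y_1=1$, and there its local ratio is exactly the inequality of condition~(iii).

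The point I expect to be most delicate is constructing the canonical pairing of the mixed steps so that it is simultaneously (a) invertible from the data $(P_1',P_2')$ (so that $\Phi$ is injective), and (b) yields quadruples $(k_1,k_2,l_1,l_2)$ lying in the regime where condition~(ii) applies. Once this pairing is in place, multiplying the pointwise inequalities gives $\omega(P_1)\omega(P_2)\le \omega(P_1')\omega(P_2')$, and summing over pairs of paths yields $T(n,k-1)\,T(n,k+1)\le T(n,k)^2$.
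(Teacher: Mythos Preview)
Your outline is close to the paper's argument in spirit: both interpret $T(n,k)$ via weighted lattice paths, build an injection on pairs of paths, and compare weights step by step by classifying step-type pairs and invoking conditions (i)--(iii). The substantive difference, and the place where your proposal breaks, is the choice of the swap time $t^\ast$.

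You take $t^\ast$ to be the \emph{least} time with $y_2(t)-y_1(t)=1$. With that choice the gap $y_2-y_1$ is uncontrolled for $t>t^\ast$: it starts at $1$ and ends at $2$, but nothing stops it from dropping to $0$ or below in between. For instance, with $P_1=E\,NE\,E\,E$ and $P_2=NE\,E\,NE\,NE$ (so $n=4$, $k=2$) one has $t^\ast=1$ yet the gap is $0$ at $t=2$. This breaks two pieces of your plan at once. First, at a ``both east'' time with $y_1(t)\ge y_2(t)-1$, log-concavity of $c$ gives the \emph{reverse} inequality $c(t,y_1{+}1)\,c(t,y_2{-}1)\le c(t,y_1)\,c(t,y_2)$; condition~(i) only produces a factor $\ge 1$ when the gap is at least $2$. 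Second, your Dyck matching, which pairs each $(NE,E)$ step with an \emph{earlier} $(E,NE)$ step, fails outright in the example: the $(NE,E)$ step at $t=2$ has no $(E,NE)$ step after $t^\ast$ preceding it. For the same reason the hypotheses $k_2>k_1$ and $l_2>l_1$ in condition~(ii) are not guaranteed, nor is the leftover mixed step located where condition~(iii) applies.

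The paper repairs this by choosing the \emph{largest} index $i$ at which the tails can be exchanged; that forces the gap to be $\ge 2$ at every step beyond $i$, which is exactly what condition~(i) needs. It also swaps entire tails rather than a single step (so nothing depends on the step types at the swap point itself), and encodes the pair of tails, read backward from the endpoints, as a $2$-Motzkin path that stays nonnegative except for its final down step. Standard Motzkin matching then pairs each up step with a later down step, and unrolling to forward time produces precisely quadruples $(n_1,k_1,k_2;\,n_2,l_1,l_2)$ satisfying the hypotheses of condition~(ii); the lone unmatched down step is the step immediately after $i$, where condition~(iii) is invoked. If you switch to the largest swap index and to the tail-swap injection (whose invertibility is then clear, since $i$ is again the largest index with gap $1$ for the image pair), your analysis becomes essentially the paper's.
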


Using the theorem above, we obtain a sufficient condition for log-concavity of rows of arrays satisfying the Equation \eqref{Eq:super-recurrence}. 

\begin{theorem}\label{thm: a,b,c}
    Let $\alpha,\beta,\gamma$ and $\alpha',\beta',\gamma'$ be real numbers such that the following hold:
    \begin{enumerate}
        \item $\alpha, \alpha',\beta\ge 0$
        \item $\beta'\in [-\alpha',0]$
        \item $\alpha+\beta+\gamma,\ \alpha'+\beta'+\gamma'\ge 0$
    \end{enumerate}
    Then \begin{equation}
        T(n,k)=(\alpha n+\beta k+\gamma)^l\ T(n-1,k)+(\alpha'n+\beta'k+\gamma')^l \ T(n-1,k-1)
    \end{equation}
    is log-concave in $k$ for all $n$.
\end{theorem}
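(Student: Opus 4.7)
The plan is to verify the three hypotheses of Theorem~\ref{thm: main} for the coefficients $c(n,k) = (\alpha n + \beta k + \gamma)^l$ and $d(n,k) = (\alpha' n + \beta' k + \gamma')^l$. Write $C(n,k) = \alpha n + \beta k + \gamma$ and $D(n,k) = \alpha' n + \beta' k + \gamma'$ for the affine bases. The hypotheses (1)--(3) ensure that $C$ and $D$ are non-negative on every entry appearing in the argument; since $x \mapsto x^l$ is monotone on $[0,\infty)$, each required inequality for $c,d$ will follow once I verify the corresponding inequality for $C, D$.

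Conditions (i) and (iii) are short. The identity $C(n,k-1)\,C(n,k+1) = C(n,k)^2 - \beta^2$ gives log-concavity of $C$ in $k$, and similarly $D(n,k-1)\,D(n,k+1) = D(n,k)^2 - (\beta')^2$ for $D$. For condition (iii), a direct expansion yields $D(n,k+1)\,C(n,k) - D(n,k)\,C(n,k+1) = \beta'\,C(n,k) - \beta\,D(n,k)$, which is non-positive because $\beta \ge 0$, $\beta' \le 0$, and $C,D \ge 0$.

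The main work is condition (ii). I plan to split it as the product of two sub-inequalities,
\begin{align*}
C(n_1, k_1)\, C(n_2, l_2) &\le C(n_1, k_1+1)\, C(n_2, l_2-1), \\
D(n_1, k_2)\, D(n_2, l_1) &\le D(n_1, k_2-1)\, D(n_2, l_1+1),
\end{align*}
whose product (after raising to the $l$-th power, which is safe by non-negativity) is exactly condition (ii). The first sub-inequality expands algebraically to $\beta\bigl[\alpha(n_2-n_1) + \beta(l_2 - k_1 - 1)\bigr] \ge 0$, which follows from $\alpha, \beta \ge 0$, $n_2 > n_1$, and the integer chain $l_2 > l_1 > k_1$ (so $l_2 - k_1 \ge 2$). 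The second sub-inequality reduces, after substituting $k_2 = k_1 + (l_2 - l_1) + 1$, to
\[
\alpha'(n_2 - n_1) + \beta'(2 l_1 - k_1 - l_2) \ge 0.
\]
I split on the sign of $2l_1 - k_1 - l_2$: when it is non-positive, both summands are non-negative (using $\alpha' \ge 0$ and $\beta' \le 0$); when it is positive, the bound $\beta' \ge -\alpha'$ reduces the claim to $n_2 - n_1 \ge 2l_1 - k_1 - l_2$, which holds because $n_2 - n_1 \ge l_1 - k_1 > 2l_1 - k_1 - l_2$ (the last step using $l_2 > l_1$).

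The principal obstacle is precisely this second sub-inequality: the sign of $2l_1 - k_1 - l_2$ is not fixed by the other hypotheses, so one must genuinely use both the two-sided constraint $\beta' \in [-\alpha', 0]$ and the combinatorial bound $n_2 - n_1 \ge l_1 - k_1$ to cover the case where this quantity is positive. Once condition (ii) is secured, Theorem~\ref{thm: main} immediately yields the claimed row log-concavity of $T(n,k)$.
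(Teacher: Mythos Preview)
Your proposal is correct and follows essentially the same route as the paper: verify conditions (i)--(iii) of Theorem~\ref{thm: main} for the affine bases $C,D$ and then pass to $l$th powers, splitting condition (ii) into the same two factorwise sub-inequalities and reducing each to the identical affine expressions $\beta[\alpha(n_2-n_1)+\beta(l_2-k_1-1)]\ge 0$ and $-\beta'[\alpha'(n_2-n_1)+\beta'(l_1-k_2+1)]\ge 0$. The only cosmetic difference is that you make the sign of $l_1-k_2+1=2l_1-k_1-l_2$ explicit via a case split, whereas the paper absorbs it into a single chain of inequalities; your treatment is in fact slightly cleaner, since the paper's chain $\alpha'(l_1-k_2+1)\ge|\beta'|(l_1-k_2+1)$ tacitly assumes $l_1-k_2+1\ge 0$ (the complementary case being trivial but unstated there).
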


The main technique is to interpret the entries of the triangular array $(T(n,k))$ in terms of weighted lattice paths, which we describe in the next section.
Our method allows us to establish the log-concavity of the generalized Stirling and Lah numbers described above, as well as several other well-studied combinatorial sequences, which we list in the Table \ref{tab:parameters} below.

\begin{table}[ht]
\centering
\begin{tabular}{|l|l|l|}
\hline
\textbf{Parameters $(\alpha,\beta,\gamma;\  \alpha',\beta',\gamma')$} & \textbf{Name ($l=1$) and Reference} & \textbf{OEIS Entry} \\
\hline
$(0,1,1;\nu,-1,1-\nu)$ & $\nu$-order Eulerian numbers \cite{higher-order-eulerian} & A173018, A008517, A219512 \\
$(0,j,1;\ j,-j,0)$ & $1/j$-Eulerian numbers \cite{1/k-eulerian} & \\
$(0,1,0;\ 0,0,1)$ & Stirling subset numbers & A008277\\
$(1,1,-1;\ 0,0,1)$ & Lah numbers & A015278\\
$(1,m,-1;\ 0,0,m)$ & $m-$associated Lah numbers&\\
$(2,1,-2;\ 0,0,1)$ & Generalisation of Lah, Stirling numbers  \cite{stirling-lah} & A035342\\
$(r-1,1,1-r;\ 0,0,1)$ & $S(r,n,k)$ \cite{broder-r-stir}& \\
$(0,0,1;\ 0,0,1)$ & Binomial Coefficients & A007318\\
$(1,0,-1;\ 0,0,1)$ & Stirling cycle numbers & A132393\\
$(1,2,-1;\ 3,-2,1)$ & Scaled type A Narayana numbers \cite{ma-gamma-descent} & \\
$(1,2,0;\ 3,-2,0)$ & Scaled type B Narayana numbers \cite{ma-gamma-descent}& \\
$(2,1,-1;\ 0,0,1)$ & Holiday numbers of first kind \cite{Szekely-holiday}&\\
$(2,1,0;\ 0,0,1)$& Holiday numbers of second kind\cite{Szekely-holiday} & \\
\hline
\end{tabular}
\caption{Some sequences of combinatorial interest satisfying Theorem \ref{thm: a,b,c}.}
\label{tab:parameters}
\end{table}

 Motivated by the definitions of the $(l,r)$-Stirling and $(l,r)$-Lah numbers, we generalise the Eulerian numbers to a two parameter $(l,r)$-Eulerian number. 
A subexceedant function is a function $f:[n]\rightarrow [n]$ such that $0<f(i)\le i$ for all $i\in [n]$. Dumont (see \cite{mantaci-fanja-perm-rep}) showed that the Eulerian number $\eul(n,k)$ counts the number of subexceedant functions such that the cardinality of the image of $f$ is $k+1$. Similar to the Stirling and Lah counterparts, we define the block leader of a subexceedant function. The block leader of a subexceedant function $f$ is the set $\{i\in [n]: f(i)\notin f([n-1])\}$. Let us denote it by $\bl(f)$. Note that the $|\bl(f)|$ is the size of the image of $f$ because for each element in the image of $f$ the smallest element in the pre-image is in $\bl(f)$.

Let $l$ be a positive integer. Define $\eul_r^{(l)}(n,k)$ to count the number of $l$-tuples $(f_1,\dots, f_l)$ such that $[r]\subset \bl(f_1)$, $|\bl(f_1)|=k+1$, and $\bl(f_1)=\dots=\bl(f_l)$. We show that there is a natural connection 
between the $r$-Stirling numbers and $r$-Eulerian numbers (setting $l=1$), which is done by showing that the $r$-Eulerian
numbers count the number of permutations in $\SSS_n$ with $k$ descents such that $[r]$ is subset of the set of first letters 
of the increasing runs in $\pi$. Finally, we show that the associated polynomials of the $(l,1)$-Eulerian numbers are palindromic and make a remark about their gamma-nonnegativity and real-rootedness.

\section{Definitions and Preliminaries}
Suppose that $n\ge 1$ and $0 <k \le n$, a triangular recurrence is a recurrence of the form 
$$T(n,k)=c(n,k)\ T(n-1,k) + d(n,k)\ T(n-1, k-1) $$
with boundary conditions $T(1,1)=1$ and  and the coefficients $c(n,k), d(n,k)$ are non-negative real numbers. 

For $A, B \in \mathbb{Z}^2$, we denote by $\lp_{A \rightarrow B}$ the set of lattice paths starting with $A$ and ending at $B$ that use the step $N=(0,1)$ (north) and the step $C=(1,1)$ (cross). We can encode these paths as words in the alphabet $\{ N, C \}$. For example, we write $p=NCCCNN$ for the lattice path below.

\begin{center}
    \begin{tikzpicture}[scale=0.6]

    \coordinate (A) at (1,1);
    \coordinate (B) at (1,2);
    \coordinate (C) at (2,3);
    \coordinate (D) at (3,4);
    \coordinate (E) at (4,5);
    \coordinate (F) at (4,6);
    \coordinate (G) at (4,7);

    \fill (1,1) circle (2pt) node[right] {$(1,1)$};
    \draw[-] (A) -- (B) node[right] {$(1,2)$};
    \draw[-] (B) -- (C) node[right] {$(2,3)$};
    \draw[-] (C) -- (D) node[right] {$(3,4)$};
    \draw[-] (D) -- (E) node[right] {$(4,5)$};
    \draw[-] (E) -- (F) node[right] {$(4,6)$};
    \draw[-] (F) -- (G) node[right] {$(4,7)$};

    \foreach \p in {A,B,C,D,E,F,G} {
        \fill[black] (\p) circle (2pt);
    }
\end{tikzpicture}
\end{center}

Since most of the paths in this paper will start from the point $(1,1)$, we will write $\lp_{A}$ instead of $\lp_{(1,1)\rightarrow A}$.

We shall assign weights to the step of the lattice path. For a north step $N$ that starts at $(k,n-1)$ and ends at $(k,n)$, we assign the weight $c(n,k)$. For the cross step $C$ that starts at $(k-1,n-1)$ and ends at $(k,n)$, we assign the weight $d(n,k)$. 

\begin{center}
    \begin{tikzpicture}[scale=0.6]

    \coordinate (A) at (1,1);
    \coordinate (B) at (1,2);
    \coordinate (C) at (2,3);
    \coordinate (D) at (3,4);
    \coordinate (E) at (4,5);
    \coordinate (F) at (4,6);
    \coordinate (G) at (4,7);

    \fill (1,1) circle (2pt);
    \draw[-] (A) -- (B) node[midway, left] {$c(2,1)$};
    \draw[-] (B) -- (C) node[midway, below, right] {$d(3,2)$};
    \draw[-] (C) -- (D) node[midway, below, right] {$d(4,3)$};
    \draw[-] (D) -- (E) node[midway, below, right] {$d(5,4)$};
    \draw[-] (E) -- (F) node[midway, left] {$c(6,4)$};
    \draw[-] (F) -- (G) node[midway, left] {$c(7,4)$};

    \foreach \p in {A,B,C,D,E,F,G} {
        \fill[black] (\p) circle (2pt);
    }
\end{tikzpicture}
\end{center}

The weight of a path $p \in \lp_{A \rightarrow B}$ is the product of the weights of the steps that make up the path. In our running example, it would be $c(2,1)d(3,2)d(4,3)d(5,4)c(6,4)c(7,4)$.
\section{A log-concavity criterion for triangular arrays}
\subsection{Triangular arrays}
We have the following interpretation of a triangular array in terms of weighted lattice paths:
\begin{theorem}
    For $n\ge 1$ and $0<k \le n$, let $T(n,k)$ satisfy the following triangular recurrence.
    \begin{equation*}
        T(n,k)=c(n,k)\ T(n-1,k)+d(n,k)\ T(n-1,k-1).
    \end{equation*}
Then,
    \begin{equation*}
        T(n,k)=\displaystyle \sum_{p \in \lp_{(k,n)}} \wt(p).
    \end{equation*}
\end{theorem}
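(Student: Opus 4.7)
The natural approach is induction on $n$, using the fact that any lattice path from $(1,1)$ to $(k,n)$ is determined uniquely by its last step together with the subpath preceding it.

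The plan is as follows. First, handle the base case $n=1$: the only valid index is $k=1$, and $\mc P_{(1,1)}$ consists of the single empty path, whose (empty product) weight is $1$, matching the boundary condition $T(1,1)=1$. Also note the degenerate cases $k=0$ or $k>n$ contribute empty sums, consistent with $T(n,k)=0$ there.

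For the inductive step, fix $n\ge 2$ and $1\le k\le n$, and assume the formula holds for all indices with first coordinate $n-1$. Every path $p\in\mc P_{(k,n)}$ has a well-defined last step, which is either
\begin{itemize}
    \item a north step $N$ from $(k,n-1)$ to $(k,n)$, in which case $p=p'\cdot N$ for a unique $p'\in\mc P_{(k,n-1)}$ and $\wt(p)=\wt(p')\,c(n,k)$; or
    \item a cross step $C$ from $(k-1,n-1)$ to $(k,n)$, in which case $p=p'\cdot C$ for a unique $p'\in\mc P_{(k-1,n-1)}$ and $\wt(p)=\wt(p')\,d(n,k)$.
\end{itemize}
This yields a bijective decomposition $\mc P_{(k,n)} = \mc P_{(k,n-1)}\cdot N \ \sqcup\ \mc P_{(k-1,n-1)}\cdot C$, and splitting the sum accordingly gives
\begin{equation*}
    \sum_{p\in \mc P_{(k,n)}}\wt(p) \;=\; c(n,k)\!\!\sum_{p'\in\mc P_{(k,n-1)}}\!\!\wt(p') \;+\; d(n,k)\!\!\sum_{p'\in\mc P_{(k-1,n-1)}}\!\!\wt(p').
\end{equation*}
Applying the inductive hypothesis to both inner sums identifies the right-hand side with $c(n,k)\,T(n-1,k)+d(n,k)\,T(n-1,k-1)$, which equals $T(n,k)$ by the recurrence.

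There is no real obstacle here; the only points requiring care are (a) verifying that every path in $\mc P_{(k,n)}$ genuinely has $k\ge 1$ and $n\ge 2$ so that the last step exists and lies within the grid (paths starting at $(1,1)$ automatically satisfy $k\ge 1$, and the empty path is handled by the base case), and (b) checking the boundary behavior when $k=n$ (where $\mc P_{(k,n-1)}$ is empty because no path from $(1,1)$ can reach $(n,n-1)$ using only $N$ and $C$ steps) and when $k=1$ (where $\mc P_{(0,n-1)}$ is empty). In both degenerate cases the vanishing of one of the two sums matches the vanishing of the corresponding $T$-value, so the recurrence is still respected.
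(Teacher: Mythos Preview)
Your proof is correct and follows exactly the same approach as the paper: induction on $n$, decomposing each path in $\lp_{(k,n)}$ according to whether its final step is $N$ or $C$, and then invoking the recurrence. Your write-up is simply more detailed about the base case and the degenerate boundary situations, which the paper leaves implicit.
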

\begin{proof}
    We prove by induction on $n$. Each path in $\lp_{(k,n)}$ is either path in $\lp_{(k,n-1)}$ followed by an $N$ step or a path in $\lp_{(k-1,n-1)}$ followed by a cross step $C$. Since the weight of a path is the product of weight of the constituting steps, the statement follows from the recurrence.
    \end{proof}

Under this interpretation, we have the following interpretation of the log-concavity of rows.
\begin{corollary}
    The rows of the triangular array $T(n,k)$ are log-concave if and only if
\begin{equation*}
    \displaystyle \sum_{(p,q)\in \lp_{(k+1,n)}\times \lp_{(k-1,n)}} \wt(p)\wt(q) \le \displaystyle \sum_{(p',q')\in \lp_{(k,n)}\times \lp_{(k,n)}} \wt(p')\wt(q')
\end{equation*}
\end{corollary}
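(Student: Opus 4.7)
The plan is to unfold the definition of log-concavity and translate it into the lattice path language via the preceding theorem; this is a direct translation rather than a substantive result, but it is the pivot that allows the main theorem to be attacked combinatorially.

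First I would recall that the sequence $(T(n,k))_{k=0}^n$ is log-concave in $k$ precisely when $T(n,k)^2 \ge T(n,k-1)\, T(n,k+1)$ for every $k \in [n-1]$. Next I would apply the theorem just proved to rewrite each entry as a weighted path sum, so that $T(n,k\pm 1) = \sum_{p \in \lp_{(k\pm 1, n)}} \wt(p)$ and similarly for $T(n,k)$. Distributing the products of these finite sums over their index sets (i.e.\ using that a product of two sums equals a single sum indexed by the Cartesian product) yields
\begin{equation*}
T(n,k-1)\, T(n,k+1) = \sum_{(p,q)\in \lp_{(k+1,n)}\times \lp_{(k-1,n)}} \wt(p)\wt(q),
\end{equation*}
and
\begin{equation*}
T(n,k)^2 = \sum_{(p',q')\in \lp_{(k,n)}\times \lp_{(k,n)}} \wt(p')\wt(q').
\end{equation*}
The log-concavity inequality is then literally the inequality in the statement, and the ``if and only if'' is immediate from these two identities since every quantity involved is a finite real sum.

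There is no serious obstacle here: the corollary is a dictionary translation. Its value lies in suggesting the proof strategy for Theorem \ref{thm: main}, namely to exhibit an injection $\varphi : \lp_{(k+1,n)}\times \lp_{(k-1,n)} \hookrightarrow \lp_{(k,n)}\times \lp_{(k,n)}$ that is non-decreasing in weight, which is exactly the technique advertised in the introduction.
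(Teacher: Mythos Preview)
Your proposal is correct and follows essentially the same approach as the paper: both unfold the log-concavity inequality $T(n,k)^2 \ge T(n,k-1)T(n,k+1)$, invoke the preceding theorem to replace each $T(n,\cdot)$ by a weighted path sum, and then distribute the products over Cartesian products of path sets. The paper's proof is slightly terser but the argument is identical.
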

\begin{proof}
    The inequality $T(n,k)^2\ge T(n,k-1)T(n,k+1)$ is equivalent to \begin{equation*}
    \bigg(\displaystyle \sum_{p \in \lp_{(k,n)}} \wt(p)\bigg)^2 \ge \bigg( \displaystyle \sum_{p\in \lp_{(k-1,n)}} \wt(p)\bigg) \bigg(\displaystyle \sum_{q\in \lp_{(k+1,n)}} \wt(q)\bigg)
\end{equation*}
This can be written as 
\begin{equation*}
    \displaystyle \sum_{(p',q')\in \lp_{(k,n)}\times \lp_{(k,n)}} \wt(p')\wt(q') \ge \displaystyle \sum_{(p,q)\in \lp_{(k+1,n)}\times \lp_{(k-1,n)}} \wt(p)\wt(q). 
\end{equation*}
This completes our proof.
\end{proof}
\subsection{An injection from $\lp_{(k+1,n)}\times \lp_{(k-1,n)}$ to $\lp_{(k,n)}\times \lp_{(k,n)}$}
We shall describe an injective map that sends a pair of paths $(p,q)\in \lp_{(k+1,n)}\times \lp_{(k-1,n)}$ to a pair of paths $(p',q')\in \lp_{(k,n)}\times \lp_{(k,n)}$. The injection is based on an idea of Sagan \cite{sagan-log-concave} that is used to give injective proofs of log-concavity of the Stirling numbers of first and second kind. 

\begin{theorem}[Discrete Intermediate Value Theorem]\label{prop: DIVT}
Let $f$ be an integer-valued function on the integers in the interval $[m,n]$. Suppose that $|f(i+1)-f(i)|\le 1$ for $m\le i\le n$. If $f(m)f(n)<0$, then $f(x)=0$ for some integer $x \in [m,n]$.    
\end{theorem}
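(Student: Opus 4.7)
The plan is a standard discrete analogue of the classical intermediate value theorem argument: exploit the unit step bound to trap the function near zero the first time it tries to cross.

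Without loss of generality, I would assume $f(m)>0$ and $f(n)<0$ (the opposite case follows by replacing $f$ with $-f$, which preserves both hypotheses). Since $f$ is integer-valued, $f(m)\ge 1$ and $f(n)\le -1$. Consider the set
\[
S=\{\,i\in[m,n]\ :\ f(i)>0\,\}.
\]
This set is nonempty because $m\in S$, and it is bounded above by $n-1$ since $f(n)<0$. Let $x_0=\max S$. By maximality, $f(x_0)\ge 1$ while $f(x_0+1)\le 0$.

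Now apply the hypothesis $|f(x_0+1)-f(x_0)|\le 1$. This yields
\[
f(x_0+1)\ge f(x_0)-1\ge 0,
\]
and combined with $f(x_0+1)\le 0$ we conclude $f(x_0+1)=0$. Thus $x=x_0+1\in[m+1,n]\subseteq[m,n]$ is the desired integer zero.

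The argument is essentially two lines; there is no real obstacle, the only subtlety being to note that the function is integer-valued (so $f(x_0)\ge 1$, not merely $f(x_0)>0$), which is what forces $f(x_0)-1\ge 0$ and hence pins $f(x_0+1)$ to zero rather than allowing it to land at a negative integer. The step-size hypothesis $|f(i+1)-f(i)|\le 1$ prevents $f$ from skipping over $0$ as it passes from positive to non-positive values.
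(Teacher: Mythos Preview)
Your argument is correct. The paper actually states this Discrete Intermediate Value Theorem without proof, treating it as a standard folklore fact, so there is no ``paper's own proof'' to compare against; your short maximality argument is exactly the kind of routine verification one would expect here.
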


\begin{proposition}
    Let $(p,q)\in \lp_{(k+1,n)}\times \lp_{(k-1,n)}$ with $p=p_1\dots p_{n-1}$ and $q=q_1\dots q_{n-1}$. There is an index $1\le i\le n-1$ such that $p'=p_1\dots p_i q_{i+1}\dots q_{n-1}$ and $q'=q_1\dots q_i p_{i+1}\dots p_{n-1}$ are in $\lp_{(k,n)}\times \lp_{(k,n)}$.
\end{proposition}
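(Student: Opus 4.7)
The plan is to track the horizontal discrepancy between the two paths as a function of how many steps have been taken, and to invoke the Discrete Intermediate Value Theorem (Theorem~\ref{prop: DIVT}) to locate a swap site.

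Concretely, I would encode each path by the horizontal coordinates of the lattice points it visits. Any path in $\lp_{(k+1,n)}$ or $\lp_{(k-1,n)}$ has exactly $n-1$ steps, each of which either keeps the $x$-coordinate the same (an $N$ step) or increases it by $1$ (a $C$ step). Let $a_0, a_1, \dots, a_{n-1}$ be the $x$-coordinates of the vertices of $p$ and $b_0, b_1, \dots, b_{n-1}$ those of $q$. Then $a_0 = b_0 = 1$, $a_{n-1} = k+1$, and $b_{n-1} = k-1$.

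Next, I would define $f(i) := a_i - b_i$ on $\{0, 1, \dots, n-1\}$. Since $a_{i+1}-a_i, b_{i+1}-b_i \in \{0,1\}$, we get $|f(i+1)-f(i)|\le 1$, and by construction $f(0)=0$ and $f(n-1)=2$. Shifting by $1$, the function $g(i) := f(i)-1$ satisfies $g(0)=-1 < 0$, $g(n-1)=1>0$, and still has unit-bounded increments. Theorem~\ref{prop: DIVT} applied to $g$ on $[0,n-1]$ produces an integer $i$ with $g(i)=0$, i.e.\ $a_i-b_i = 1$. Since $g(0)$ and $g(n-1)$ are nonzero, this $i$ lies in $\{1,\dots,n-2\}$, which is a valid index for the swap.

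Finally, I would check that this index $i$ works. Gluing the first $i$ steps of $p$ to the last $n-1-i$ steps of $q$ produces a path whose $x$-endpoint is $a_i + (b_{n-1}-b_i) = (a_i-b_i) + (k-1) = k$, so $p' \in \lp_{(k,n)}$. Symmetrically, $q'$ ends at $b_i + (a_{n-1}-a_i) = (k+1) - (a_i-b_i) = k$, so $q' \in \lp_{(k,n)}$ as well.

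There is no real obstacle here; the only thing to get right is the choice of the function to feed into the Discrete Intermediate Value Theorem and the verification that the target value $1$ lands strictly between $f(0)$ and $f(n-1)$ so that the recovered index $i$ is genuinely interior and the two resulting paths each land at $(k,n)$. The existence of the swap site is precisely a DIVT statement for the horizontal-displacement difference.
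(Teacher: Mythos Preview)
Your proof is correct and essentially identical to the paper's: your function $g(i)=a_i-b_i-1$ is exactly the paper's $N(i)=\#\{r\le i:p_r=C\}-\#\{r\le i:q_r=C\}-1$, since the count of $C$ steps among the first $i$ letters of a path is precisely the horizontal displacement $a_i-1$ (respectively $b_i-1$). Your endpoint verification for $p'$ and $q'$ is in fact more explicit than the paper's, which simply asserts that the swap ``has the intended effect.''
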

\begin{proof}
Let $N:[0,n-1] \rightarrow \mathbb Z$ be the function defined by $N(0)=-1$ and $N(i)=\#\{ r\in [i]\ |\  p_r=C\} - \#\{ r \in [i]\ |\  q_r=C \}-1$ for $1\le i\le n-1$. Here, $|N(i+1)-N(i)|=1$ if either $p_{i+1}=C$ but $q_{i+1}\neq C$, or $p_{i+1}\neq C$ but $q_{i+1}=C$, and $|N(i+1)-N(i)|=0$ otherwise. Also, $N(0)=-1$ and $N(n-1)=1$ making $N(0)N(n-1)=-1<0$. Therefore, by Theorem \ref{prop: DIVT}, we have that $N(x)=0$ for some $x\in [n-1]$. Since $N(n-1)-N(x)=1$, we have that there is one more $C$ step in the remaining subword of $p$ than there is for $q$. Therefore, switching the subwords will have the intended effect.
\end{proof}
We are ready to define an injection $I$ from $\lp_{(k+1,n)}\times \lp_{(k-1,n)}$ to $\lp_{(k,n)}\times \lp_{(k,n)}$.
Given $(p,q) \in \lp_{(k+1,n)}\times \lp_{(k-1,n)}$ with $p=p_1\dots p_{n-1}$ and $q=q_1\dots q_{n-1}$, choose the largest index $i$ such that $p'=p_1\dots p_i q_{i+1}\dots q_{n-1}$ and $q'=q_1\dots q_i p_{i+1}\dots p_{n-1}$ belongs to $\lp_{(k,n)}\times \lp_{(k,n)}$. Then $I(p,q):=(p',q')$. The map is well-defined because of the previous proposition.

\begin{remark}\label{rem: imp}
    A step in path $q\in \lp_{k-1,n}$ after the index $i$ is moved exactly $1$ step to the right and a step that started at $(k_1,n_1)$ would now start at $(k_1+1,n_1)$ in the path $q'\in \lp_{(k,n)}$. Similarly, any step in path $p\in \lp_{k+1,n}$ would move exactly $1$ step to the left after the  index $i$. Any step that started at $(k_2,n_1)$ would now start at $(k_2-1,n_1)$ in the path $p'\in \lp_{(k,n)}$.
\end{remark}

\begin{example}
    Take the two paths $NNCNNN\in \lp_{(2,7)}$ and $NCCCNN\in \lp_{(4,7)}$. Under the injection, it would go to $NNCCNN$ and $NCCNNN$ in $\lp_{(3,7)}$. The index $i$ where the shift occurs is $3$.
\begin{center}
\begin{tikzpicture}[scale=1]
    \coordinate (A) at (1,1);
    \coordinate (B) at (1,2);
    \coordinate (C) at (1,3);
    \coordinate (D) at (2,4);
    \coordinate (E) at (2,5);
    \coordinate (F) at (2,6);
    \coordinate (G) at (2,7);
    
    \draw[-, thick, blue] (1,1) -- (1,2) ;
    \draw[-, thick, blue] (1,2) -- (1,3) node[left] {$(1,3)$};
    \draw[-, thick, blue] (1,3) -- (2,4) node[left] {$(2,4)$};
    \draw[-, thick, blue] (2,4) -- (2,5) node[left] {$(2,5)$};
    \draw[-, thick, blue] (2,5) -- (2,6) node[left] {$(2,6)$};
    \draw[-, thick, blue] (2,6) -- (2,7) node[left] {$(2,7)$};

    \foreach \p in {A,B,C,D,E,F,G} {
        \fill[black] (\p) circle (2pt);
    }

    \coordinate (A) at (1,1);
    \coordinate (U) at (1,2);
    \coordinate (V) at (2,3);
    \coordinate (W) at (3,4);
    \coordinate (X) at (4,5);
    \coordinate (Y) at (4,6);
    \coordinate (Z) at (4,7);

    \fill (1,1) circle (2pt) node[right] {$(1,1)$};
    \draw[-, thick] (A) -- (U) node[right,black] {$(1,2)$};
    \draw[-, thick, red] (U) -- (V) node[right] {$(2,3)$};
    \draw[-, thick, red] (V) -- (W) node[right] {$(3,4)$};
    \draw[-, thick, red] (W) -- (X) node[right] {$(4,5)$};
    \draw[-, thick, red] (X) -- (Y) node[right] {$(4,6)$};
    \draw[-, thick, red] (Y) -- (Z) node[right] {$(4,7)$};

    \foreach \p in {A,U,V,W,X,Y,Z} {
        \fill[black] (\p) circle (2pt);
    }

   \draw[->,thick] (6,4)--(7,4);
    
\coordinate (a) at (10,1);
    \coordinate (b) at (10,2);
    \coordinate (c) at (10,3);
    \coordinate (d) at (11,4);
    \coordinate (e) at (12,5);
    \coordinate (f) at (12,6);
    \coordinate (g) at (12,7);
    
    \fill (10,1) circle (2pt) node[right] {$(1,1)$};
    \draw[-, thick, blue] (10,1) -- (10,2);
    \draw[-, thick, blue] (10,2) -- (10,3) node[left] {$(1,3)$};
    \draw[-, thick, blue] (10,3) -- (11,4) node[left] {$(2,4)$};
    \draw[-, thick, red] (11,4) -- (12,5) node[left] {$(3,5)$};
    \draw[-, thick, red] (12,5) -- (12,6) node[left] {$(3,6)$};
    \draw[-, thick, red] (12,6) -- (12,7) node[left] {$(3,7)$};

    \foreach \p in {a,b,c,d,e,f,g} {
        \fill[black] (\p) circle (2pt);
    }
\coordinate (a) at (10,1);
    \coordinate (u) at (10,2);
    \coordinate (v) at (11,3);
    \coordinate (w) at (12,4);
    \coordinate (x) at (12,5);
    \coordinate (y) at (12,6);
    \coordinate (z) at (12,7);

    \fill (1,1) circle (2pt) node[right] {$(1,1)$};
    \draw[-, thick] (a) -- (u) node[right,black] {$(1,2)$};
    \draw[-, thick, red] (u) -- (v) node[right] {$(2,3)$};
    \draw[-, thick, red] (v) -- (w) node[right,blue] {$(3,4)$};
    \draw[-, thick, blue] (w) -- (x) node[right,blue] {$(3,5)$};
    \draw[-, thick, blue] (x) -- (y) node[right,blue] {$(3,6)$};
    \draw[-, thick, blue] (y) -- (z) node[right,blue] {$(3,7)$};

    \foreach \p in {a,u,v,w,x,y,z} {
        \fill[black] (\p) circle (2pt);
    }
\end{tikzpicture}
\end{center}
\end{example}

In the next section, we give sufficient conditions for ensuring that $\wt(p,q) \le \wt I(p,q)$ for $(p,q)\in \lp_{(k+1,n)}\times \lp_{(k-1,n)}$.

\subsection{Proofs of Theorems \ref{thm: main} and \ref{thm: a,b,c}}

Usually, a $2$-Motzkin path of length $n$ is a lattice path starting from $(0,0)$ and $(n,0)$ that uses an up step $U:=(1,1)$, a down step $D:=(1,-1)$, a blue horizontal step $H_1:=(1,0)$ and a red horizontal $H_2:=(1,0)$. However, we will use the name $2$-Motzkin path to mean a lattice path with the steps described above but starting from $(0,0)$ and ending in $(n,-1)$ such that the last step is a down step $D$ and except for the last step, the remaining lattice path lies entirely above the $x$-axis.

To obtain a sufficient condition for log-concavity, we will associate a weighted $2$-Motzkin path to the pair of paths in $\lp_{(k+1,n)}\times \lp_{(k-1,n)}$ and use the weighted $2$-Motzkin path to create a sufficient condition that ensures the weight of pair of paths increases under the injection defined in the previous subsection. Let $(p,q)\in \lp_{(k+1,n)}\times \lp_{(k-1,n)}$. Let $\ch(p):=p_{i+1}\dots p_{n-1}$ and $\ch(q):=q_{i+1}\dots q_{n-1}$ where $i$ is largest index such that $p_1\dots p_i \ch(q)$ and $q_1\dots q_i \ch(p)$ are in $\lp_{(k,n)}$. 

The idea will be that the pair of paths $p,q$, when looked at from the end, start out with a spacing of $2$ ($(k-1,n)$ and $(k+1,n)$), continue with a spacing greater than or equal to $2$ and finally, getting down to a spacing of $1$ at the index $i$ prescribed by the injection. We will encode this portion of the paths as a $2$-Motzkin path.

To this end, consider the reversal operator $\rev$ on words. We start with the two words $\rev(\ch(p))$ and $\rev(\ch(q))$. We build a weighted $2$-Motzkin path out of the two words as follows. If the $j^{th}$ steps of $\rev(\ch(p))$ and $\rev(\ch(q))$ are $N$ and $C$ respectively (resp. $C$ and $N$ or $N$ and $N$ or $C$ and $C$), the $j^{th}$ step of our weighted $2$-Motzkin path is $U$ (resp. $D,H_1,H_2$). The weight of the step in the $2$-Motzkin path will be the product of the weight of the corresponding steps in $p$ and $q$. 

The fact that such a construction yields a weighted $2$-Motzkin path follows the fact that $i$ is the largest index such that $\ch(p)$ has one more $C$ step than $\ch(q)$. The $2$-Motzkin path will start at $(0,0)$ and end at $(n-i,-1)$. Except for the last $D$ step, the entire path will be above the $x$-axis. 

The $2$-Motzkin path corresponding to our pair of paths is below. The blue steps correspond to $H_1$ steps.
\begin{center}
    \begin{tikzpicture}[scale=1]
    \coordinate (A) at (0,0);
    \coordinate (B) at (1,0);
    \coordinate (C) at (2,0);
    \coordinate (D) at (3,-1);
 \draw[-,thick, blue] node[below, black] {$(0,0)$} (A)--(B) node[below,black] {$(1,0)$} ;
\draw[-,thick, blue]  (B)--(C) node[above, black] {$(2,0)$} ;
\draw[-,thick] (C)--(D)  node[below,black] {$(3,-1)$};

\foreach \p in {A,B,C,D} {
        \fill[black] (\p) circle (2pt);
    }
\end{tikzpicture}
\end{center}

Let us calculate the weight of the four different steps. The weight of $U$ step is $c(n_1,k_1)d(n_1,k_2)$, and the $D$ step is $d(n_1,k_1)c(n_1,k_2)$. The weight of a $H_1$ step is $c(n_1,k_1)c(n_1,k_2)$. The weight of a $H_2$ step is $d(n_1,k_1)d(n_1,k_2)$.

 On a $2$-Motzkin path, for each $U$ step, there is a corresponding $D$ step. That is if $(n_2,l_2)\in p$ and $(n_2,l_1)\in q$, there is a $(n_1,k_2)\in p$ and $(n_1,k_1)\in q$ such that the following are true.
 \begin{itemize}
     \item $n_2>n_1$, $l_2>k_2>k_1$, $l_2>l_1>k_1$,
     \item $k_2-k_1=l_2-l_1+1$,
     \item $n_2-n_1\ge l_2-k_2$ and $n_2-n_1\ge l_1-k_1$.
 \end{itemize}

We are in a position to prove Theorem \ref{thm: main}.
\begin{proof}[Proof of Theorem \ref{thm: main}]
The weight of a $H_1$ step is $c(n_1,k_1)c(n_1,k_2)$ which goes, under the injection, to a $H_1$ step whose weight is $c(n_1,k_1+1)c(n_1,k_2-1)$ (see Remark \ref{rem: imp}). Similarly, for $H_2$, $d(n_1,k_1)d(n_1,k_2)$ goes to $d(n_1,k_1+1)d(n_1,k_2-1)$. Therefore, Condition $(i)$ ensures that $\wt(H_1)\le \wt(I(H_1))$ and $\wt(H_2)\le \wt(I(H_2))$.

Similarly, Condition $(ii)$ ensures that the product of the weights of a $U$ and its corresponding $D$ step in the path is greater after applying the injection. 

Finally, Condition $(iii)$ ensures that the final $D$ step is greater after applying the injection.

These three conditions ensure that 
\begin{equation*}
    \displaystyle \sum_{(p,q)\in \lp_{(k+1,n)}\times \lp_{(k-1,n)}} \wt(p)\wt(q) \le \displaystyle \sum_{(p',q')\in \lp_{(k,n)}\times \lp_{(k,n)}} \wt(p')\wt(q')
\end{equation*}
Therefore, the sequence $T(n,k)$ is log-concave in $k$.
\end{proof}

\begin{remark}
    It should be possible to weaken Condition $(iii)$ or completely remove it by sufficiently strengthening the other two conditions.
\end{remark}

The following is a quick application of this theorem. The Legendre-Stirling numbers were shown to be log-concave \cite{andrews-legendre-stirling}. It is easy to verify that the coefficients satisfy the above theorem, therefore, giving us another proof of their log-concavity.
\begin{corollary}
    The Legendre-Stirling numbers defined by
    \begin{equation}
        LS(n,k)=(k^2+k)LS(n-1,k)+LS(n-1,k-1)
    \end{equation}
    are log-concave in $k$ for all $n$.
\end{corollary}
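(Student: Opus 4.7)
The plan is to apply Theorem \ref{thm: main} directly to the coefficients $c(n,k)=k(k+1)$ and $d(n,k)=1$. Since Theorem \ref{thm: main} is precisely the general log-concavity criterion developed in the paper, the entire task reduces to checking its three hypotheses in this concrete case. A convenient feature here is that both coefficients are independent of $n$, so each condition becomes a polynomial inequality in the index variables alone.

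First I would verify condition (i). The coefficient $d \equiv 1$ is trivially log-concave, so only $c(n,k)=k(k+1)$ requires attention. Log-concavity of $k \mapsto k(k+1)$ in $k$ is immediate: the inequality $c(n,k)^2 \ge c(n,k-1)c(n,k+1)$, after dividing by the common factor $k(k+1)$, reduces to $k^2+k \ge k^2+k-2$. (Alternatively, $k(k+1)$ is a polynomial with only real roots, hence log-concave on $\mathbb{N}$.)

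Next I would check condition (ii). Since $d \equiv 1$, the quadruple product inequality collapses to
\[
c(n_1,k_1)\, c(n_2,l_2) \;\le\; c(n_1,k_1+1)\, c(n_2,l_2-1).
\]
Substituting $c(n,k)=k(k+1)$ and canceling the common factor $(k_1+1)\,l_2$ from both sides, this rearranges to the affine inequality $k_1+1 \le l_2$, which is forced by the standing hypothesis chain $l_2 > k_2 > k_1$. Condition (iii) is similarly painless: it reads $k(k+1) \le (k+1)(k+2)$, obvious from nonnegativity.

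I do not foresee a real obstacle — every verification reduces to a simple polynomial inequality — so the proof is essentially a bookkeeping exercise. The only mildly delicate point is condition (ii): one must be careful to pair up the right indices after canceling the common factors, and to confirm that the inequality $l_2 \ge k_1+1$ that emerges at the end really is guaranteed by the hypotheses on $(n_i,k_i,l_i)$ in the statement of Theorem \ref{thm: main}.
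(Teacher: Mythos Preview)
Your proposal is correct and follows exactly the approach the paper indicates: apply Theorem~\ref{thm: main} with $c(n,k)=k(k+1)$ and $d(n,k)=1$ and check the three hypotheses directly. The paper itself merely asserts that ``it is easy to verify that the coefficients satisfy the above theorem,'' so your write-up actually supplies the details the paper omits; in particular your reduction of condition~(ii) to $k_1+1\le l_2$ (which follows from $l_2>k_2>k_1$) is the key computation.
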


The following is an observation that follows from the form of the conditions in Theorem \ref{thm: main}.
\begin{corollary}\label{cor: multiplicative}
    For $c(n,k),d(n,k)\ge 0$, if the triangular array $$T(n,k)=c(n,k)T(n-1,k)+d(n,k)T(n-1,k-1)$$ satisfies the conditions of Theorem \ref{thm: main}, then for $l\in \mathbb{N}$, so does the array $$\widetilde{T}(n,k)=c(n,k)^l\widetilde{T}(n-1,k)+d(n,k)^l\widetilde{T}(n-1,k-1).$$ 
\end{corollary}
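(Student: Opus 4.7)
The plan is to verify directly that each of the three conditions (i), (ii), (iii) of Theorem \ref{thm: main} is preserved when the coefficients $c(n,k)$ and $d(n,k)$ are replaced by $c(n,k)^l$ and $d(n,k)^l$. The key observation that makes this essentially routine is that every one of these conditions is a polynomial inequality of the form $A \le B$ in which both $A$ and $B$ are monomials in the values of $c$ and $d$, and crucially the two sides have the \emph{same} multi-degree: the same total number of $c$-factors and the same total number of $d$-factors. Since $c, d \ge 0$, raising both sides to the $l$-th power is order-preserving, and because the multi-degree matches on the two sides, the resulting inequality is exactly the original condition applied to $c^l, d^l$.

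Concretely, I would work through the three conditions in order. For Condition (i), the log-concavity $c(n,k)^2 \ge c(n,k-1)c(n,k+1)$ has both sides of $c$-degree $2$, so raising to the $l$-th power yields $(c(n,k)^l)^2 \ge c(n,k-1)^l c(n,k+1)^l$, which is the log-concavity of $c^l$ in $k$; the same argument applies to $d$. For Condition (ii), both sides are products of two $c$-values and two $d$-values, so taking $l$-th powers gives the analogous four-point inequality for $c^l$ and $d^l$. For Condition (iii), both sides are products of one $c$-value and one $d$-value, so the inequality $d(n_1,k+1)c(n_1,k) \le d(n_1,k)c(n_1,k+1)$ transforms into the same statement for $c^l, d^l$ after raising to the $l$-th power.

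There is no real obstacle here; the entire content of the corollary is the homogeneity observation above combined with the monotonicity of $x \mapsto x^l$ on $[0,\infty)$. If desired one can phrase the argument uniformly by noting that if $\prod_i a_i \le \prod_i b_i$ with $a_i, b_i \ge 0$, then $\prod_i a_i^l \le \prod_i b_i^l$. Applying this to each of the three structural inequalities in Theorem \ref{thm: main}, individually, completes the verification, and the log-concavity of the rows of $\widetilde{T}(n,k)$ then follows from Theorem \ref{thm: main} itself.
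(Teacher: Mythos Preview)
Your argument is correct and is exactly what the paper has in mind: the paper does not give a separate proof of this corollary but simply remarks that it ``follows from the form of the conditions in Theorem~\ref{thm: main},'' i.e.\ precisely your homogeneity observation that each condition compares two nonnegative monomials of matching multi-degree in $c$ and $d$, so raising to the $l$-th power yields the same inequality for $c^l,d^l$.
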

Using the two aforementioned results, we can prove the main result of the paper.
\begin{proof}[Proof of Theorem \ref{thm: a,b,c}]
    Condition (i) is easily verified.
    We insist that $\alpha, \alpha'\ge 0$. Otherwise, for all large $n$, we would have $\alpha n +\beta k+\gamma$ or $\alpha'n+\beta' k+\gamma'$ becoming negative. Similarly, we insist on $\alpha+\beta+\gamma, \alpha'+\beta'+\gamma'\ge 0$ for ensuring the non-negativity of $c(n,k),d(n,k)$. We want to show that conditions $(ii), (iii)$ of Theorem \ref{thm: a,b,c} are enough to apply Theorem \ref{thm: main}. 
    
    $\textbf{Verifying Condition (ii):}$ 
    \begin{eqnarray*}
         &&c(n_1,k_1+1)c(n_2,l_2-1)-c(n_1,k_1)c(n_2,l_2)\\
         &=& (\alpha n_1 +\beta k_1+\gamma+\beta)(\alpha n_2+\beta l_2+\gamma-\beta)-(\alpha n_2+\beta k_1+\gamma)(\alpha n_2+\beta l_2+\gamma)\\
         &=& \alpha\beta (n_2-n_1)+\beta^2(l_2-k_1-1) \ge 0
\\
\\
       &&d(n_1,k_2-1)d(n_2,l_1+1)-d(n_1,k_2)d(n_2,l_1)\\
        &=&(\alpha' n_1+\beta' k_2+\gamma'-\beta')(\alpha'n_2+\beta'l_1+\gamma'+\beta')-(\alpha'n_1+\beta'k_2+\gamma')(\alpha'n_2+\beta'l_1+\gamma')\\
        &=& -\beta' ( \alpha'(n_2-n_1)+ \beta'(l_1-k_2+1)) \ge 0
    \end{eqnarray*}
    The last line follows because $\alpha'(n_2-n_1)\ge\alpha'(l_1-k_1)\ge\alpha'(l_1-k_2+1)\ge|\beta'|(l_1-k_2+1)$ and so, $ \alpha'(n_2-n_1)+ \beta'(l_1-k_2+1)\ge 0$.

    $\textbf{Verifying Condition (iii):}$  
    \begin{eqnarray*}
     &&\frac{d(n_1,k+1)}{d(n_1,k)}\frac{c(n_1,k)}{c(n_1,k+1)}\le 1\\
    \end{eqnarray*}
    Since $\beta'\le 0$, $d(n,k)$ increases as $k$ decreases and since $\beta\ge0$, $c(n,k)$ increases as $k$ increases.
\end{proof}
\begin{remark}
    The conditions $\alpha+\beta+\gamma\ge 0$ and $\alpha'+\beta'+\gamma'\ge 0$ exist to ensure that coefficients $c,d$ are non-negative. One may increase $\gamma,\gamma'$ by non-negative integers and retain log-concavity of the sequence.
\end{remark}

\begin{corollary}
    The $(l,r)$-Lah numbers are log-concave in $k$ for all $n$.
\end{corollary}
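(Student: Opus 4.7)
The approach is to invoke Theorem \ref{thm: a,b,c} as a turn-key machine on the recurrence
\[
L_r^{(l)}(n,k) = (n+k-1)^l\, L_r^{(l)}(n-1,k) + L_r^{(l)}(n-1,k-1),
\]
and simply check that its hypotheses hold. The first step is to read off the parameters: the coefficient $(n+k-1)^l = (\alpha n + \beta k + \gamma)^l$ corresponds to $(\alpha,\beta,\gamma) = (1,1,-1)$, and the coefficient $1 = (\alpha' n + \beta' k + \gamma')^l$ corresponds to $(\alpha',\beta',\gamma') = (0,0,1)$, matching exactly the Lah-numbers entry of Table \ref{tab:parameters}.

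The second step is the verification of the three hypotheses of Theorem \ref{thm: a,b,c}: (i) $\alpha, \alpha', \beta \geq 0$ holds since $1, 0, 1 \geq 0$; (ii) $\beta' = 0$ trivially lies in $[-\alpha', 0] = \{0\}$; and (iii) $\alpha + \beta + \gamma = 1 \geq 0$ and $\alpha' + \beta' + \gamma' = 1 \geq 0$. All conditions are satisfied, so Theorem \ref{thm: a,b,c} yields log-concavity of the rows immediately.

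The one subtlety --- and what I would expect to be the only point actually requiring justification --- is that the parameter $r$ does not appear in the recurrence coefficients at all; it enters only through the initial data (essentially $L_r^{(l)}(r,r) = 1$ together with $L_r^{(l)}(n,k) = 0$ for $k < r$). Since the hypotheses of Theorems \ref{thm: main} and \ref{thm: a,b,c} are imposed purely on $c(n,k)$ and $d(n,k)$, and since the underlying injection rests on a Discrete Intermediate Value Theorem argument whose only input is that the difference in cross-step counts between the two paths is $2$ (a fact independent of the common starting point of the paths), the argument applies verbatim with lattice paths now starting at $(r,r)$ in place of $(1,1)$. Equivalently, one may set $\widetilde{T}(n,k) := L_r^{(l)}(n+r, k+r)$ to obtain a genuine triangular array whose super-recurrence has parameters $(1,1,2r-1;\, 0,0,1)$, which still satisfies all three hypotheses. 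With that caveat recorded, the corollary --- and hence the conjecture of Tankosi\'c from \cite{aleks-gen-lah} --- follows at once.
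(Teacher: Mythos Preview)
Your proof is correct and follows essentially the same route as the paper's. The paper likewise invokes Theorem~\ref{thm: a,b,c} after absorbing the parameter $r$ into a shifted array, writing the recurrence as $T(n,k)=(n+k+2r-3)^l\,T(n-1,k)+T(n-1,k-1)$ with $T(0,0)=1$; this is the same index shift you describe (your offset gives $\gamma=2r-1$ rather than $2r-3$, but either works), and the verification of hypotheses (i)--(iii) is then immediate in both cases.
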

\begin{proof}
    The $(l,r)$-Lah numbers satisfy the recurrence relation
    \begin{equation}
        T(n,k)=(n+k+2r-3)^l\ T(n-1,k)+T(n-1,k-1)
    \end{equation}
    with the initial condition $T(0,0)=1$. By Theorem \ref{thm: a,b,c}, we have that this triangular array is log-concave in $k$ for all $n$.
\end{proof}
\section{Two parameter generalization of Eulerian numbers}\label{sec: 2-para }

Recall that we defined $\eul_r^{(l)}(n,k)$ to count the number of $l$-tuples $(f_1,\dots, f_l)$ such that $[r]\subset \bl(f_1)$, $|\bl(f_1)|=k+1$, and $\bl(f_1)=\dots=\bl(f_l)$. We first start out by proving a recurrence relation for the $(l,r)$-Eulerian numbers. They satisfy a triangular recurrence analogous to the ones satisfied by their Stirling and Lah counterparts.
\begin{proposition}
    For $n\ge k\ge r-1\ge0$ and a positive integer $l$, we have 
    \begin{equation}
        \eul_r^{(l)}(n,k)=(n-k)^l \eul_r^{(l)}(n-1,k-1)+(k+1)^l\eul_r^{(l)}(n-1,k)
    \end{equation}
    and $\eul_r^{(l)}(r,r)=1$
\end{proposition}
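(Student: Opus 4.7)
The plan is to prove the recurrence by conditioning on whether the index $n$ lies in the common block leader set $B := \bl(f_1) = \cdots = \bl(f_l)$. The key observation is that for any subexceedant function $f$ on $[n]$, an index $i \le n-1$ is a block leader of $f$ if and only if it is a block leader of the restriction $g := f|_{[n-1]}$, since in both cases the defining condition is that $f(i)$ is a new value among $f(1), \ldots, f(i)$. Consequently, setting $g_i := f_i|_{[n-1]}$ yields $\bl(g_i) = B \cap [n-1]$, so the $g_i$ still share a common block leader set, and the map $(f_1, \ldots, f_l) \mapsto (g_1, \ldots, g_l)$ partitions the objects counted by $\eul_r^{(l)}(n,k)$ into fibers over tuples counted by smaller instances.

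In the case $n \notin B$, the restrictions satisfy $|\bl(g_i)| = k+1$ and still contain $[r]$, so $(g_1, \ldots, g_l)$ is counted by $\eul_r^{(l)}(n-1, k)$. To reconstruct each $f_i$ from $g_i$, one must choose $f_i(n) \in g_i([n-1])$ (any previously attained value, so that $n$ is not a new block leader); since $|g_i([n-1])| = |\bl(g_i)| = k+1$, this case contributes $(k+1)^l \cdot \eul_r^{(l)}(n-1, k)$. In the case $n \in B$ (which requires $n > r$, otherwise $B \subseteq [r]$ precludes this case), the restrictions satisfy $|\bl(g_i)| = k$ and still contain $[r]$, so $(g_1, \ldots, g_l)$ is counted by $\eul_r^{(l)}(n-1, k-1)$. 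To reconstruct $f_i$, one chooses $f_i(n) \in [n] \setminus g_i([n-1])$, a set of size $n - k$, contributing $(n-k)^l \cdot \eul_r^{(l)}(n-1, k-1)$. Summing the two cases gives the recurrence. The initial condition $\eul_r^{(l)}(r,r)=1$ (read with the convention of the paper) follows because on $[r]$, the constraints $[r] \subseteq \bl(f_i) \subseteq [r]$ force $\bl(f_i) = [r]$, i.e., each $f_i$ is the identity, yielding a unique tuple.

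The only non-routine point is verifying the independence of the choices of $f_i(n)$ across the index $i$. This is legitimate because the compatibility condition $\bl(f_1) = \cdots = \bl(f_l)$ constrains only the block leader \emph{sets}, not the images or values themselves; thus the sets $g_i([n-1])$ may genuinely differ across $i$, and each $f_i(n)$ is selected independently from the appropriate subset of $[n]$ (either $g_i([n-1])$ or its complement). The membership of $n$ in $\bl(f_i)$ depends only on whether $f_i(n)$ lands in $g_i([n-1])$, so independent choices preserve the common-block-leader condition, and multiplication across $i$ yields the factors $(k+1)^l$ and $(n-k)^l$.
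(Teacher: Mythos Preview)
Your proof is correct and follows the same approach as the paper: condition on whether $n$ lies in the common block leader set, count the choices for $f_i(n)$ in each case, and multiply across the $l$ coordinates. Your write-up is considerably more detailed than the paper's (which is only a few lines), in particular making explicit the restriction map $f_i \mapsto f_i|_{[n-1]}$ and the independence of the choices of $f_i(n)$ across $i$, but the underlying argument is identical.
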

\begin{proof}
     Let $f$ be a subexceedant function with $k+1$ elements in $\bl(f)$ and $[r]\subset \bl(f)$. Then, $f$ could have been obtained in one of the following two ways.
    \begin{enumerate}
        \item if $n\in \bl(f_i)$, its image has one of $(n-k)^l$ choices,
        \item if not, then its image has one of $(k+1)^l$ choices.
    \end{enumerate}
\end{proof}

\begin{remark}
      By Theorem \ref{thm: a,b,c}, the $(l,r)$-Eulerian numbers are log-concave and therefore, unimodal.    
\end{remark}

\subsection{A bijection}
Let $\pi\in \SSS_n$. 
Define the block leaders of $\pi$, denoted by $\bl(\pi)$, as the set of first letters of the increasing runs of $\pi$. For example, let $\pi=28|7|4|136|5$ with bars separating increasing runs. Then, $\bl(\pi)=\{ 1,2,4,5,7 \}$. 
We show that $\eul_r(n,k):=\eul^{(1)}_r(n,k)$ also counts the number of permutations $\pi\in \SSS_n$ with $k$ descents such that $[r]\subset \bl(\pi)$.

Starting from a subexceedant function $f$ that satisfies $[r]\subset \bl(f)$ and $|\bl(f)|=k+1$, we give a map $\Lambda$ to a permutation $\pi$ with $k$ descents such that $[r]\subset \bl(\pi)$ with the additional property that $f(\bl(f))=\bl(\pi)$.

We give the bijection $\Lambda$ inductively. Start with the empty word and $i=1$. If $f(i)=i$, add $i$ to the left of all existing letters. If $f(i)\in f([i-1])$, then add $i$ at the end of the increasing run starting at $f(i)$. If $f(i)\notin f([i-1])$, then add $i$ immediately to the left of element $f(i)$. Increase $i$ to $i+1$ and repeat.

\begin{example}
    Starting with the subexceedant function $f=12121547$, we show the steps.
    \begin{eqnarray*}
        \phi\rightarrow 1 \rightarrow 21\rightarrow 213\rightarrow 2413\rightarrow 24135\\
        \rightarrow 241365 \rightarrow 2741365\rightarrow 28741365
    \end{eqnarray*}
    Therefore, $\Lambda(12121547)=28741365$.
    Note that the $f(\bl(f))=\{ 1,2,4,5,7 \} =\bl(28741365)$.
\end{example}

We can reconstruct the subexceedant function given a permutation $\pi$ as follows. For $1\le i\le n$, let $\pi|_i$ be the permutation with all letters greater than $i$ removed. In our running example, $\pi|_5=24135$.
If $i$ appears to the left of all elements in $\pi|_{i}$, then $f(i)=i$. If $i$ appears in front of some element $j\notin f([i-1])$, then $f(i)=j$. Else, $f(i)$ is the closest element from $f([i-1])$ to the left of $i$. 
\begin{example}
    Start with $\pi=28741365$. Here, $\pi|_1=1$, $\pi|_2=21$, $\pi|_3=213$, $\pi|_4=2413$, $\pi|_5=24135$, $\pi|_6=241365$, $\pi|_{7}=2741365$ and $\pi|_8=28741365$. 
    It is easy to check that $f(1)=1,f(2)=2$. $f(3)=1$ because $1$ the closest element in $f([2])$ to the left of $3$. $f(4)=2$, $f(5)=1$, in the same way. $f(6)=5$ because $5\notin f([5])$. Similarly, $f(7)=4$ and $f(8)=7$.
\end{example}

\begin{proposition}
    The map $\Lambda$ from the set of subexceedant functions from $[n]$ to $[n]$ such that $[r]\in \bl(f)$, $|\bl(f)|=k+1$ to the set of permutations on $[n]$ such that $[r]\in \bl(\pi)$, $|\bl(\pi)|=k+1$ is a bijection such that $f(\bl(f))=\bl(\Lambda(f))$.
\end{proposition}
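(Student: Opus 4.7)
The plan is to establish, by induction on $i$, that the partial word $\pi_i$ produced after the $i$th step of $\Lambda$ applied to $f$ is a permutation of $[i]$ satisfying the crucial invariant
\[
\bl(\pi_i)=f(\bl(f)\cap[i])=f([i]).
\]
This invariant yields $f(\bl(f))=\bl(\Lambda(f))$ on setting $i=n$, and also provides the framework for proving invertibility. The base case $i=1$ is immediate since $f(1)=1$ forces $\pi_1=1$.

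For the inductive step I would split into the three construction cases. If $f(i)=i$, then $i\in\bl(f)$; prepending $i$ to $\pi_{i-1}$ creates a descent with the old leftmost element (which lies in $[i-1]$), so the previous leading block leader is preserved and $i$ becomes a new one. If $f(i)\in f([i-1])$, the induction gives $f(i)\in\bl(\pi_{i-1})$, so $f(i)$ starts an increasing run in $\pi_{i-1}$; appending $i$ to the end of that run preserves every existing descent and adds no new one, and since $i\notin\bl(f)$ both sides of the invariant remain unchanged. If $f(i)\notin f([i-1])$ but $f(i)<i$, then $f(i)\in[i-1]\setminus\bl(\pi_{i-1})$ sits in the interior of some run, so its left neighbor $x$ satisfies $x<f(i)<i$; inserting $i$ just before $f(i)$ creates the descent $i>f(i)$ while keeping $x<i$, making $f(i)$ a new block leader without disturbing any others and without turning $i$ itself into a block leader. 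In each case the invariant propagates. Since $f|_{\bl(f)}$ is injective by definition of $\bl(f)$, this gives $|\bl(\Lambda(f))|=|f(\bl(f))|=|\bl(f)|=k+1$. The inclusion $[r]\subset\bl(\Lambda(f))$ follows from $[r]\subset\bl(f)$: the subexceedant condition makes $f|_{[r]}$ an injection $[r]\to[r]$, hence a bijection, so $[r]=f([r])\subset f(\bl(f))=\bl(\Lambda(f))$.

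For bijectivity I would verify that the reverse procedure described just before the proposition recovers $f$ from $\Lambda(f)$, giving injectivity. The key observation is $\pi|_i=\pi_i$, since insertions of values $>i$ do not alter the relative order of elements of $[i]$; combined with $\bl(\pi_{i-1})=f([i-1])$ from the invariant, the three reverse cases correspond precisely to the three forward cases (``$i$ at the far left'' $\leftrightarrow$ Case~1; ``$i$ immediately followed by some $j\notin f([i-1])$'' $\leftrightarrow$ Case~3; the ``else'' clause, in which $f(i)$ is the closest element of $f([i-1])=\bl(\pi_{i-1})$ to the left of $i$, $\leftrightarrow$ Case~2, recovering the block leader of the run containing $i$). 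Surjectivity follows by a mirror induction: applying the reverse procedure to any permutation $\pi$ in the codomain yields a function satisfying $1\le f(i)\le i$ with $\Lambda(f)=\pi$. The main obstacle is the bookkeeping in the third forward case, where one must verify simultaneously that $f(i)$ lies in the interior of an increasing run of $\pi_{i-1}$ (so that the insertion before $f(i)$ is unambiguous and makes $f(i)$ a new block leader) and that its left neighbor is strictly less than $f(i)$ (so $i$ itself does not become a block leader); both follow from the inductive identity $\bl(\pi_{i-1})=f([i-1])$ and the defining property of increasing runs.
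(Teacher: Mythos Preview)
Your proof is correct and follows essentially the same approach as the paper: both argue bijectivity via the explicitly described inverse procedure and establish $f(\bl(f))=\bl(\Lambda(f))$ by case analysis on the three construction rules. Your version is considerably more careful---the explicit inductive invariant $\bl(\pi_i)=f([i])$ cleanly handles the persistence of block leaders through later insertions and the verification that the codomain conditions $[r]\subset\bl(\pi)$ and $|\bl(\pi)|=k+1$ actually hold, details the paper leaves to the reader.
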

\begin{proof}
    It is clear that the function $\Lambda$ is a bijection as we are able to describe its inverse. It remains to show that $f(\bl(f))=\bl(\Lambda(f))$. Suppose $f(i)\notin f([i-1])$, then $i$ goes behind $f(i)$, making $f(i)$ the first element of an increasing run. Similarly, the elements $i$ such that $f(i)=i$ are added to the leftmost position and therefore, are first elements of an increasing run. Therefore, the block leaders of $\Lambda(f)$ are exactly the elements of $f(\bl(f))$.
\end{proof}
We have the following connection between the $r$-Stirling number of the second kind and $\eul_r(n,k)$. When $r=1$, this identity specialises to B\'ona \cite[Theorem 1.18]{bona-combin-permut}.
\begin{theorem} For $n\ge k\ge r-1\ge 0$, we have
    \begin{equation}\label{r-stir-eul}
        k!S_r(n,k)=\sum_{i=r}^{k}\eul_r(n,i-1)\binom{n-i}{k-i}.
    \end{equation}
\end{theorem}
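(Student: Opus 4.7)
The plan is to generalize B\'ona's bijective proof of the classical $r=1$ identity by pairing pairs $(\pi, S)$ on the right-hand side with ordered set partitions on the left-hand side.

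I would begin by interpreting each side combinatorially. The LHS $k!\, S_r(n,k)$ counts ordered set partitions $(B_1, \dots, B_k)$ of $[n]$ into $k$ non-empty blocks in which $1, 2, \dots, r$ lie in distinct blocks. On the RHS, the bijection $\Lambda$ of the previous subsection identifies $\eul_r(n, i-1)$ with the number of permutations $\pi \in \SSS_n$ having $i-1$ descents (equivalently $|\bl(\pi)| = i$) and satisfying $[r] \subseteq \bl(\pi)$, while $\binom{n-i}{k-i}$ counts size-$(k-i)$ subsets $S$ of the $n-i$ ascent positions of $\pi$.

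Next I would construct the bijection. Given a pair $(\pi, S)$, place block boundaries at all $i - 1$ descent positions of $\pi$ together with the $k - i$ positions in $S$; this cuts $\pi$ into $k$ consecutive increasing segments, which form the ordered set partition. For the inverse, given an ordered set partition, write each block in increasing order, concatenate to form $\pi$, and record as $S$ those inter-block positions of $\pi$ that turn out to be ascents.

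The $r$-constraint translates across this bijection as follows. In the forward direction, since every $j \in [r]$ begins an ascending run of $\pi$, the position immediately before $j$ is a descent and hence a block boundary, so $1, \dots, r$ sit at the start of distinct blocks. In the reverse direction, for $j \in [r]$, every integer smaller than $j$ lies in $[r]\setminus\{j\}$ and hence in a block other than $j$'s; so $j$ is the minimum of its block and is the leading letter of that block's segment in $\pi$. The principal obstacle will be showing that $j \in [r]$ actually begins an ascending run of $\pi$ in the reverse direction --- equivalently, that the last letter of the preceding block (if any) exceeds $j$. This requires a careful case analysis of which inter-block positions become descents versus ascents once blocks are concatenated in increasing order, and exploits the fact that the encoding of $S$ as ascent-inter-block positions fills in exactly the remaining freedom needed to match the counts. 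At $r=1$ the hypothesis $1 \in \bl(\pi)$ is automatic because $1$ is the smallest letter, so the construction collapses to B\'ona's original bijection.
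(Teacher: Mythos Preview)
Your bijection is exactly the one the paper uses: cut $\pi$ at all its descent positions together with $k-i$ chosen ascent positions to obtain an ordered partition, and in the reverse direction sort each block increasingly and concatenate. The paper's write-up is in fact terser than yours --- it only observes that $(\pi,S)$ can be read off uniquely from the ordered partition, and does not explicitly address the point you single out as the ``principal obstacle''.

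That obstacle is genuine, and no case analysis will remove it. Take $r=2$, $n=3$, $k=2$ and the ordered partition $(\{1\},\{2,3\})$: here $1$ and $2$ lie in distinct blocks, so it is counted on the left by $k!\,S_r(n,k)$, yet concatenating the increasingly-sorted blocks gives $\pi=123$, whose only run-leader is $1$; thus $2\notin\bl(\pi)$ and the recovered pair $(\pi,S)$ is \emph{not} counted on the right. Numerically $2!\,S_2(3,2)=4$ while the right-hand side equals $\eul_2(3,1)\binom{1}{0}=2$ under the subexceedant-function definition of $\eul_r$ (and $3$ under the permutation description, which already shows those two interpretations disagree here). Your forward map is injective, so you do get $\sum_i \eul_r(n,i-1)\binom{n-i}{k-i}\le k!\,S_r(n,k)$, but surjectivity --- and hence equality --- fails for $r\ge 2$. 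The paper's proof shares this gap; the statement appears to need correction before either argument can go through.
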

\begin{proof}
    The LHS is the number of ordered partitions $\Pi$ of $[n]$ into $k$ blocks such that $[r] \subset \bl(\Pi)$. We need to show that RHS counts the same objects. Let $\pi\in \SSS_n$ with $i-1$ descents and $[r]\subset \bl(\pi)$. The $i$ increasing runs of $\pi$ give us an ordered partition of $[n]$. Now, the number of blocks is $i$, which can be smaller than $k$. However, we can make more blocks by making splits of the increasing runs. We need to make $k-i$ splits. This can be done by choosing $k-i$ of the $n-i$ different places where we can split.

    We just need to now show that the ordered partitions thus formed are all distinct. However, given an ordered partition $\Pi$ of $[n]$, we can arrange the blocks in ascending order internally and read from left to right a unique permutation and the choice of splitting from which it can be obtained.
\end{proof}

\begin{corollary} For $n\ge k\ge r-1\ge 0$, we have
    \begin{equation}
        \eul_r(n,k-1)=\sum_{i=1}^{k}(-1)^{k-i}S_r(n,i)\binom{n-i}{k-i}i!
    \end{equation}
\end{corollary}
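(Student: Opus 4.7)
The plan is to derive the corollary from the preceding theorem by binomial inversion. First, I would introduce the shorthand $u_k := k!\, S_r(n,k)$ and $v_k := \eul_r(n,k-1)$, so that the theorem rewrites as
\[ u_k = \sum_{i=r}^{k} v_i \binom{n-i}{k-i}. \]
Since $v_i = \eul_r(n,i-1) = 0$ whenever $i < r$ (having $[r]\subset \bl(\pi)$ forces at least $r$ increasing runs, hence at least $r-1$ descents), this is equivalent to the unconstrained identity $u_k = \sum_{i} v_i \binom{n-i}{k-i}$.

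Next I would invoke the standard binomial inversion lemma: from $u_k = \sum_{i} v_i \binom{n-i}{k-i}$ one recovers
\[ v_k = \sum_{i} (-1)^{k-i} u_i \binom{n-i}{k-i}. \]
This rests on the orthogonality relation
\[ \sum_{k} (-1)^{m-k} \binom{n-k}{m-k}\binom{n-i}{k-i} = \delta_{i,m}, \]
which is itself a one-line consequence of the subset-of-a-subset identity $\binom{n-k}{m-k}\binom{n-i}{k-i} = \binom{n-i}{m-i}\binom{m-i}{k-i}$ together with the alternating sum $\sum_{j} (-1)^{j} \binom{m-i}{j} = [m=i]$.

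Substituting back the definitions of $u_i$ and $v_k$ then yields
\[ \eul_r(n,k-1) = \sum_{i} (-1)^{k-i} i!\, S_r(n,i) \binom{n-i}{k-i}, \]
and extending the summation to $i \in \{1,\ldots,k\}$ as stated in the corollary is free of cost because $S_r(n,i) = 0$ for $i < r$ by pigeonhole (one cannot place $r$ elements in separate blocks of a partition with fewer than $r$ blocks). There is no substantive obstacle in this derivation; the only care required is index bookkeeping (shifting between $i$ and $i-1$ in the theorem statement) and verifying that the terms added outside the natural summation range vanish.
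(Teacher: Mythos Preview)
Your proposal is correct and is essentially the same argument as the paper's: the paper packages the binomial inversion as a generating-function identity, rewriting the theorem as $\sum_k \eul_r(n,k-1)(t+1)^{n-k}=\sum_k k!\,S_r(n,k)\,t^{n-k}$ and then substituting $t\mapsto t-1$, which is exactly the inversion step you spell out via the orthogonality relation. Your handling of the summation range (vanishing of $\eul_r(n,i-1)$ and $S_r(n,i)$ for $i<r$) is also correct.
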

\begin{proof}
    Equation \ref{r-stir-eul} can be rewritten as $\sum_{k=r}^n\eul_r(n,k-1)(t+1)^k=\sum_{k=r}^nk!S_r(n,k)t^{n-k}$.
    The corollary follows from the substitution $t\rightarrow t-1$.
\end{proof}
\subsection{Palindromicity of the associated generalised Eulerian polynomials}
In this section, let us restrict our attention to the case when $r=1$.
We define associated $(l,1)$-Eulerian polynomials to be
\begin{equation}
    \eul_n^{(l)}(t):=\sum_{k=0}^{n-1} \eul^{(l)}(n,k)t^k.
\end{equation}
It is not immediately clear that this polynomial is palindromic. To that end, we define \begin{equation}\label{eq: r=1 eul}
     \eul_n^{(l)}(s,t):= \displaystyle \sum_{k=0}^{n-1} \eul^{(l)}(n,k)s^kt^{n-1-k}.
\end{equation} 
We show that the polynomial $\eul_n^{(l)}(s,t)$ is symmetric in $s,t$, i.e.,
$\eul_n^{(l)}(s,t)=\eul_n^{(l)}(s,t)$. Let $\dd_s=s\frac{\partial }{\partial s}$ and $\dd_t=t\frac{\partial }{\partial t}$ in $\mathbb Q[s,t]$.
The symmetry becomes apparent from the following proposition.

\begin{proposition}\label{prop: diff-recurrence}
    The polynomials $\eul^{(l)}_n(s,t)$ satisfy the following partial differential equation.
    \begin{equation}
        \eul^{(l)}_n(s,t)= \frac{1}{s} \dd_s^l(st\eul^{(l)}_{n-1}(s,t))+ \frac{1}{t} \dd_t^l(st\eul^{(l)}_{n-1}(s,t))
    \end{equation}
    with $\eul^{(l)}_0(s,t)=1$ initially.
\end{proposition}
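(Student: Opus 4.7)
The plan is to prove the identity by a direct generating function computation, expanding the right-hand side using the triangular recurrence for $\eul^{(l)}(n,k)$ that was established in the previous proposition.

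First I would record the key fact about the Euler operators: since $\dd_s$ acts on monomials by $\dd_s(s^a t^b) = a\, s^a t^b$, iteration gives $\dd_s^l(s^a t^b) = a^l\, s^a t^b$, and analogously $\dd_t^l(s^a t^b) = b^l\, s^a t^b$. This is what makes $\dd_s^l$ and $\dd_t^l$ the natural operators here: they implement multiplication of each monomial by the $l$-th power of one of the exponents, which is exactly the shape of the coefficients in the recurrence $\eul^{(l)}(n,k) = (k+1)^l\eul^{(l)}(n-1,k)+(n-k)^l\eul^{(l)}(n-1,k-1)$.

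Next I would expand the two terms on the right-hand side. From the definition of $\eul^{(l)}_{n-1}(s,t)$,
$$st\cdot \eul^{(l)}_{n-1}(s,t)=\sum_{k=0}^{n-2}\eul^{(l)}(n-1,k)\, s^{k+1}t^{n-1-k}.$$
Applying $\dd_s^l$ produces a factor $(k+1)^l$ (the $s$-exponent is $k+1$), and then dividing by $s$ lowers the power of $s$ by one:
$$\frac{1}{s}\dd_s^l\bigl(st\,\eul^{(l)}_{n-1}(s,t)\bigr)=\sum_{k=0}^{n-2}(k+1)^l\,\eul^{(l)}(n-1,k)\,s^{k}t^{n-1-k}.$$
The analogous computation with $\dd_t^l$ produces the factor $(n-1-k)^l$; after dividing by $t$ and reindexing $j=k+1$ I would obtain
$$\frac{1}{t}\dd_t^l\bigl(st\,\eul^{(l)}_{n-1}(s,t)\bigr)=\sum_{j=1}^{n-1}(n-j)^l\,\eul^{(l)}(n-1,j-1)\,s^{j}t^{n-1-j}.$$

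Finally, I would add these two sums. Using the conventions $\eul^{(l)}(n-1,-1)=0$ and $\eul^{(l)}(n-1,n-1)=0$, the combined right-hand side equals
$$\sum_{k=0}^{n-1}\bigl[(k+1)^l\eul^{(l)}(n-1,k)+(n-k)^l\eul^{(l)}(n-1,k-1)\bigr]s^{k}t^{n-1-k},$$
and the bracketed expression is exactly $\eul^{(l)}(n,k)$ by the triangular recurrence. Thus the sum equals $\eul^{(l)}_n(s,t)$, which completes the identity. There is no real obstacle: the content of the proof is simply that the operators $s^{-1}\dd_s^l(st\cdot)$ and $t^{-1}\dd_t^l(st\cdot)$ encode the two terms of the $\eul^{(l)}(n,k)$ recurrence at the level of generating polynomials.
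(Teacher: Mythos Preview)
Your proof is correct and follows essentially the same approach as the paper: expand $st\,\eul^{(l)}_{n-1}(s,t)$ monomial by monomial, apply the Euler operators (which multiply each monomial by the $l$-th power of the appropriate exponent), divide by $s$ and $t$, and identify the resulting coefficientwise expression with the recurrence for $\eul^{(l)}(n,k)$. If anything, your indexing is slightly cleaner than the paper's, since you work directly with $\eul^{(l)}_{n-1}$ as in the statement and make the boundary conventions $\eul^{(l)}(n-1,-1)=\eul^{(l)}(n-1,n-1)=0$ explicit.
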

\begin{proof}
    \begin{eqnarray*}
        \dd_s^l(st\eul^{(l)}_{n}(s,t)) & = & t \dd_s^l(s\eul^{(l)}_{n}(s,t))\\
        & = & t\bigg( \dd_s^l(\sum_{k=0}^{n} \eul^{(l)}(n,k) s^{k+1} t^{n-1-k}) \bigg)\\ 
        & = & \sum_{k=0}^{n-1}(k+1)^l\eul^{(l)}(n,k) s^{k+1} t^{n-k}
    \end{eqnarray*}
    Similarly,
    \begin{eqnarray*}
        \dd_t^l(st\eul^{(l)}_{n}(s,t)) & = & s \dd_t^l(t\eul^{(l)}_{n}(s,t))\\
        & = & s\bigg( \dd_t^l(\sum_{k=0}^{n} \eul^{(l)}(n,k) s^{k} t^{n-k}) \bigg)\\ 
        & = & \sum_{k=0}^{n-1}(n-k)^l\eul^{(l)}(n,k) s^{k+1} t^{n-k}
    \end{eqnarray*}
    Therefore, the RHS of the proposition evaluates the polynomial
    \begin{equation*}
        \sum_{k=0}^{n-1}(k+1)^l\eul^{(l)}(n,k) s^{k} t^{n-k}+\sum_{k=0}^{n-1}(n-k)^l\eul^{(l)}(n,k) s^{k+1} t^{n-1-k}
    \end{equation*}
    Comparing coefficients with \eqref{eq: r=1 eul} gives us the result. 
\end{proof}

A palindromic polynomial $p(x)=\displaystyle\sum_{k=0}^n a_nx^k$ is said to be gamma-positive of center of symmetry $\lfloor \frac{n}{2}\rfloor$ if it can be written in the form $p(x)=\displaystyle\sum_{i=0}^{\lfloor\frac{n}{2} \rfloor} \gamma_{n,i}t^i(1+t)^{n-2i}$.
In light of the palindromicity of these polynomials, it would be the next natural question to ask if they are gamma-positive. It is not
difficult to show that these polynomials are gamma-positive when $l=2$.
The following is the recurrence \ref{prop: diff-recurrence} for the polynomials written out. 
\begin{proposition}
    The polynomials $\eul_n^{(2)}(s,t)$ satisfy the recurrence relation
    \begin{equation}
        \eul_n^{(2)}(s,t) = (s+t)\eul_{n-1}^{(2)}(s,t) + 3st(\frac{\partial }{\partial s}
        +\frac{\partial }{\partial t})\eul_{n-1}^{(2)}(s,t)+ st(s\frac{\partial^2 }{\partial s^2}
         + t\frac{\partial^2 }{\partial t^2})\eul_{n-1}^{(2)}(s,t)
    \end{equation}
    with $\eul_0^{(2)}(s,t)=1$ initially.
\end{proposition}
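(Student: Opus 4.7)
The proposition is a direct specialization of Proposition \ref{prop: diff-recurrence} (the differential recurrence) to the case $l=2$, so the plan is simply to unravel the operators $\frac{1}{s}\dd_s^2$ and $\frac{1}{t}\dd_t^2$ applied to $st\,\eul^{(2)}_{n-1}(s,t)$ and repackage the result. Writing $f = \eul^{(2)}_{n-1}(s,t)$ for brevity and $f_s,f_{ss},f_t,f_{tt}$ for the obvious partials, the goal reduces to showing
\[
\tfrac{1}{s}\dd_s^2(stf) + \tfrac{1}{t}\dd_t^2(stf) \;=\; (s+t)f + 3st(f_s+f_t) + st(s f_{ss}+t f_{tt}).
\]

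First I would compute $\dd_s(stf)$ by a single application of the product rule: since $\dd_s = s\tfrac{\partial}{\partial s}$, one gets $\dd_s(stf) = stf + s^2 t f_s$. Applying $\dd_s$ once more, and being careful with the Leibniz rule on both summands, yields $\dd_s^2(stf) = stf + 3s^2 t f_s + s^3 t f_{ss}$; dividing by $s$ gives $tf + 3st f_s + s^2 t f_{ss}$. By the complete symmetry of the situation in $(s,\dd_s) \leftrightarrow (t,\dd_t)$, the same computation with $s$ and $t$ interchanged produces $\tfrac{1}{t}\dd_t^2(stf) = sf + 3st f_t + st^2 f_{tt}$.

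Adding these two identities and grouping terms by differential order of $f$ gives exactly the right-hand side of the proposition, which, combined with the case $l=2$ of Proposition \ref{prop: diff-recurrence}, completes the proof. The initial condition $\eul_0^{(2)}(s,t)=1$ is inherited from the initial condition $\eul^{(l)}_0(s,t)=1$ in Proposition \ref{prop: diff-recurrence}.

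There is no genuine obstacle here: the whole argument is mechanical bookkeeping with the operator $\dd_s = s\partial_s$, and the only place to slip is in tracking the cross terms from the two applications of the Leibniz rule (in particular, the factor of $3$ in $3s^2 t f_s$ arises from $1 + 2$, combining the contribution $st f_s$ from differentiating $stf$ a second time and the contribution $2st f_s$ from differentiating $s^2 t f_s$ using $\partial_s(s^2) = 2s$). Once those coefficients are verified, the symmetry between $s$ and $t$ delivers the parallel terms for free.
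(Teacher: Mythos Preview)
Your proposal is correct and matches the paper's approach exactly: the paper simply states that this proposition is the recurrence of Proposition~\ref{prop: diff-recurrence} ``written out'' for $l=2$, and you have carried out precisely that computation, correctly tracking the Leibniz contributions (in particular the $1+2=3$ producing the $3st(f_s+f_t)$ term).
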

It is straightforward to show that $$s\frac{\partial^2 }{\partial s^2}
+ t\frac{\partial^2 }{\partial t^2}$$
sends an element of the gamma-basis $(st)^i(s+t)^{n-1-i}$ to a positive linear combination of gamma basis elements as do 
the other two terms. Since these are linear operators, we have that the polynomials are gamma-positive by induction. If we set $s=1$, we get

\begin{proposition}
    The polynomials $\eul_n^2(t):=\displaystyle \sum_{k=0}^{n-1} \eul^{(2)}(n,k)t^k$ are gamma-positive for all $n\ge 2$.
\end{proposition}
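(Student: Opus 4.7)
The plan is to induct on $n$ using the partial differential recurrence for the two-variable polynomials $\eul_n^{(2)}(s,t)$ just established. The key object is the two-variable gamma basis
\[
\mathcal B_{n-1} := \{(st)^i(s+t)^{n-1-2i} : 0 \le i \le \lfloor (n-1)/2 \rfloor\}.
\]
Specializing $s=1$ sends $\mathcal B_{n-1}$ onto the one-variable gamma basis $\{t^i(1+t)^{n-1-2i}\}$, so it suffices to prove that each $\eul_n^{(2)}(s,t)$ expands with non-negative coefficients in $\mathcal B_{n-1}$; the desired gamma-positivity of $\eul_n^{(2)}(t)$ then follows by setting $s=1$.

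The base cases are immediate from the recurrence: $\eul_1^{(2)}(s,t)=1$ and $\eul_2^{(2)}(s,t)=s+t$ lie in the non-negative cone of $\mathcal B_0$ and $\mathcal B_1$ respectively. For the inductive step, assume $\eul_{n-1}^{(2)}(s,t)$ is a non-negative combination of elements of $\mathcal B_{n-2}$. Since the three operators $(s+t)\cdot$, $3st(\partial_s+\partial_t)$, and $st(s\partial_s^2+t\partial_t^2)$ appearing in the recurrence are linear, it suffices to show that each carries an arbitrary basis element $(st)^i(s+t)^{n-2-2i}$ (with $0\le i\le \lfloor(n-2)/2\rfloor$) to a non-negative combination of elements of $\mathcal B_{n-1}$.

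Multiplication by $(s+t)$ is trivial, sending $(st)^i(s+t)^{n-2-2i}$ to $(st)^i(s+t)^{n-1-2i}\in\mathcal B_{n-1}$. A direct differentiation shows that $3st(\partial_s+\partial_t)$ applied to $(st)^i(s+t)^{n-2-2i}$ produces a non-negative combination of $(st)^i(s+t)^{n-1-2i}$ and $(st)^{i+1}(s+t)^{n-3-2i}$; the crucial simplification is that the cross terms $s^{i-1}t^i + s^it^{i-1}$ collapse to $s^{i-1}t^{i-1}(s+t)$, raising the exponent of $s+t$ by one and keeping the output inside $\mathcal B_{n-1}$. An analogous but longer computation for $st(s\partial_s^2+t\partial_t^2)$ yields a linear combination of the same two basis elements with coefficients $i(i-1)$ and $(n-2-2i)(n+2i-3)$.

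The main obstacle is really just the second-order bookkeeping and checking non-negativity of the coefficients on the boundary. The factor $i(i-1)$ is clearly non-negative; for $(n-2-2i)(n+2i-3)$ the constraint $i\le \lfloor(n-2)/2\rfloor$ forces $n-2-2i\ge 0$, while $n+2i-3$ can only be negative in the single edge case $n=2,\ i=0$, where the companion factor $n-2-2i$ vanishes, so the product is non-negative throughout. Once all three operators are seen to preserve the gamma cone, the induction closes and the proof is complete.
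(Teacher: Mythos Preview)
Your proposal is correct and follows essentially the same route as the paper: both argue by induction via the differential recurrence for $\eul_n^{(2)}(s,t)$, showing that each of the three operators $(s+t)\cdot$, $3st(\partial_s+\partial_t)$, and $st(s\partial_s^2+t\partial_t^2)$ sends a two-variable gamma-basis element to a non-negative combination of gamma-basis elements, then specialize $s=1$. The paper leaves the operator computations as ``straightforward'', whereas you supply the explicit coefficients $i(i-1)$ and $(n-2-2i)(n+2i-3)$ and handle the boundary case; your added detail is a genuine improvement in rigor but not a different argument.
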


Here are the polynomials $\eul_n^3(t)$ and their expansion in the gamma-basis.
\begin{eqnarray*}
    \eul_3^3(t)&=& 1+16t+t^2=(1+t)^2+14t\\
    \eul_4^3(t)&=& 1+155t+155t^2+t^3=(1+t)^3+152t(1+t)\\
    \eul_5^3(t)&=& 1+1304t+8370t^2+1304t^3+t^4=(1+t)^4+1300t(1+t)^2+5764t^2\\
    \eul_6^3(t)&=& 1+10557t+309446t^2+309446t^3+10557t^4+t^5=(1+t)^5+10552t(1+t)^3+277780t^2(1+t)
\end{eqnarray*}

\begin{remark}
We know that the Eulerian polynomials are gamma-positive and we have proved that the $(2,1)$-Eulerian polynomials are gamma-positive.
    Based on the limited data that we were able to compute, the $(l,1)$-Eulerian polynomials seem to be gamma-positive for all $l\ge 3$.
\end{remark}

\begin{remark}
    A similar remark can be made about real-rootedness. The Eulerian polynomials are known to be real rooted due to Frobenius \cite{frobenius-real-root-eulerian}. From our limited data, it appears that the $(l,1)$-Eulerian polynomials are real rooted for $l\ge 2$. 
\end{remark}

\bibliographystyle{acm}
\end{document}